\documentclass{scrartcl}

\usepackage{graphicx} 
\usepackage[T1]{fontenc}
\usepackage[utf8]{inputenc}
\usepackage[english]{babel}
\usepackage{enumitem}
\usepackage{amsmath}
\usepackage{amsthm}
\usepackage{url}
\usepackage{bbm}
\usepackage{hyperref}
\usepackage{amssymb}
\usepackage{tikz}
\usepackage{graphicx}
\usepackage{faktor}
\usepackage{xcolor, colortbl}
\usepackage{makecell}
\usepackage{caption}
\usepackage{subcaption}
\usepackage{comment}
\usepackage{setspace}
\usepackage[12pt]{moresize}
\usepackage{wasysym}
\usepackage{float}
\usepackage[margin=1in]{geometry}
\usepackage{mathrsfs}
\usepackage{url}
\usepackage{authblk}
\usepackage{esint}
\usepackage{bbm}
\usepackage{hyperref}
\usepackage{cleveref}
\hypersetup{
colorlinks = true,
urlcolor   = blue,
citecolor  = blue,
linkcolor  = blue,
}

\usepackage[normalem]{ulem}

\DeclareMathOperator*{\esssup}{ess\,sup}

\newcommand{\R}{\mathbb{R}}

\renewcommand{\d}{\mathrm{d}}

\theoremstyle{plain}
\newtheorem{thm}{Theorem}[section]

\newtheorem*{thm*}{Theorem}

\newtheorem{prop}[thm]{Proposition}

\newtheorem{assumption}{Assumption}

\newtheorem{defn}[thm]{Definition}
\theoremstyle{remark}
\newtheorem{rmk}[thm]{Remark}

\title{A Dacorogna-Moser construction of transport maps on $\R^d$ with application to geodesics on the space of couplings}
\author{Louis-Pierre \textsc{Chaintron}, Matteo \textsc{Picco} \\
\normalsize{\'Ecole Polytechnique Fédérale de Lausanne (EPFL), Institute of Mathematics, 1015
Lausanne, Switzerland.
} \vspace{-0.4cm}}

\date{}

\begin{document}

\maketitle

\begin{abstract}

\begin{center}
\large\textbf{Abstract}
\end{center}
\vspace{0.5em}
A seminal work by Dacorogna and Moser introduced a way of constructing regular transport maps from a probability distribution on a bounded domain to another one. 
In this work, we extend this construction to the whole $\R^d$ for strictly asymptotically log-concave measures, a wide class of distributions that encompasses Lipschitz-perturbations of log-concave measures. 
We then leverage this construction to study geodesics in the space of probability measures on a product set with imposed marginal laws (couplings), for which we derive optimality conditions, answering an open question in a recent work by Conforti, Lacker and Pal. 
Taking inspiration from Brenier's variational model for incompressible fluids and its regularization, we further introduce an entropic regularization of the geodesic problem, which can be seen as the Schrödinger bridge problem on the space of couplings, for which we also derive optimality conditions.
We eventually study convergence of minimizers as the regularization vanishes, and we prove convergence of the related Lagrange multipliers.
Our approach involves proving uniform-in-time global regularity estimates on elliptic and parabolic equations on $\R^d$, by exploiting the structure of asymptotically log-concave measures using the probabilistic notion of reflection coupling.
\end{abstract}

\section{Introduction} \label{sec:intro}

The construction of regular transport maps has attracted significant attention in recent years, motivated by both theoretical and applied considerations. On the theoretical side, such maps play a central role in extending functional inequalities, including the Poincaré and log-Sobolev inequalities, to broader classes of probability measures \cite{KimMilmanHeatFlow,LipschitzPropAlongHeatFlow,TransportationLogLipschitz}. From a computational perspective, regular transport maps provide an effective framework for sampling from complex probability distributions \cite{SamplingMeasTransp,chewi2023log}. Indeed, a Lipschitz transport map from a simple reference measure -- such as a Gaussian -- to a target measure enables sampling from the latter while providing explicit error estimates in terms of the Lipschitz constant of the map.
This principle is fundamental in modern generative machine learning, where the objective is to efficiently generate samples from prescribed probability distributions. 
Many approaches have been developed to enlarge the class of target measures that can be handled and to improve the efficiency of sampling, including diffusion models \cite{Ho2020,Song2021}, flow matching \cite{FlowMatching} and stochastic interpolants \cite{StochInterp}. We refer the reader to Subsection \ref{Literature review} for a more detailed overview of these methods and their connections to regular transport maps.

The seminal work \cite{DacorognaMoser} by Dacorogna and Moser constructed Hölder-continuous transport maps between probability densities $f$ and $g$ on a bounded domain $\Omega \subset \R^d$, under mild regularity assumptions on $\partial \Omega$, $f$ and $g$. In the particular case where $f$ is the uniform probability density on $\Omega$ and the target is $g = (1+ \varphi)\cdot f$ with $\int_{\Omega}\varphi (x) f (x) \d x = 0$, the transport map in \cite[Lemma 3]{DacorognaMoser} is built from a flow $(\rho_s)_{s \in [0,1]}$ of probability measures that is a weak solution of the transport equation
\begin{equation} \label{eq:transportDM}
 \partial_s \rho_s + \nabla \cdot(\rho_s v_s) = 0, \quad \rho_0 = f \d x, \quad \rho_1 =g \d x, 
\end{equation} for a suitably chosen regular vector field $(v_s)_{s \in [0,1]}$.
The transport map $T : \R^d \rightarrow \R^d$ satisfying 
\[ \rho_1 = T_{\#}\rho_0 \]
is then given by the flow generated by $(v_s)_{s \in [0,1]}$ at time $s =1$. 
A key simplifying assumption for choosing $(v_s)_{s \in [0,1]}$ is
\begin{equation*}
    \rho_s = (1+ s \varphi)\cdot f, \qquad
    v_s = -\frac{\nabla u}{1+s\varphi}, 
\end{equation*} 
which reduces \eqref{eq:transportDM} to solving the system \begin{equation}
\begin{cases} \label{Poisson system DM}
\Delta u = \varphi, &\text{   in   } \Omega, \\
\frac{\partial u}{\partial\nu} = 0, &\text{   on   } \partial\Omega,
\end{cases}
\end{equation}
the choice of the Neumann boundary condition ensuring that the resulting transport map leaves the boundary $\partial\Omega$ invariant \cite[Theorem 1]{DacorognaMoser}.
The desired regularity for the transport map then results from proving estimates on the partial differential equation (PDE) \eqref{Poisson system DM}.
However, the estimates from \cite[Theorem 1]{DacorognaMoser} explicitly depend on the boundedness of $\Omega$, which prevents a direct extension of the construction to the whole $\R^d$.

In this article, we propose an extension of the Dacorogna-Moser construction of transport maps, which is stated in Theorem \ref{Theorem Dacorogna-Moser} below, starting from a probability measure $\mu$ on $\R^d$.
Adapting the strategy from \cite{DacorognaMoser}, we reduce the problem of constructing a transport map from $\mu$ to $(1+\varphi)\cdot\mu$ to solving
\begin{equation} \label{first time new PDE}
\Delta\psi + \nabla\log\mu \cdot \nabla\psi = \varphi, \qquad \int_{\R^d}\psi\d\mu = 0, \qquad \text{in  } \R^d,
\end{equation} 
in place of \eqref{Poisson system DM}.
The regularity of the transport map then follows from proving uniform regularity estimates on the solution of \eqref{first time new PDE}.
To do so, we need to assume some decay for the tails of the measure $\mu$.

Throughout this work, our key assumption is the \emph{strict asymptotic log-concavity} of the reference measure $\mu$, which is a considerable relaxation of the standard notion of log-concavity.
To introduce it, we define the \emph{weak semiconvexity profile} of a function $f \in C^1(\R^d,\R)$ as
\begin{equation} \label{weak semiconvexity profile}
\kappa_f(r):= \frac{1}{r^2}\inf_{|x-y|=r}\langle \nabla f (x) - \nabla f (y), x-y\rangle, \quad r > 0.
\end{equation} 
It is classical that $\kappa_f \geq \alpha$, for $\alpha \in \R$, is equivalent to $f$ being $\alpha$-convex, i.e. $x \mapsto f(x) - \frac{\alpha}{2} \vert x \vert^2$ being convex.
The following definition is \cite[Definition 1.1]{WeakConc}.

\begin{defn}[Asymptotic convexity] \label{def asymptotic convexity}
We say that a function $f \in C^1(\R^d,\R)$ is asymptotically convex if \begin{equation*}
       \liminf_{r \to +\infty}\kappa_f(r) \geq 0.
\end{equation*} 
We say that $f$ is \emph{strictly} asymptotically convex if the previous inequality is strict.
\end{defn} 

Similarly, the reference measure $\mu$ is (strictly) asymptotically log-concave if $-\log \mu$ is (strictly) asymptotically convex, underlying that $\mu$ is absolutely continuous with respect to Lebesgue and identifying $\mu$ with its density.
We notice that the class of asymptotically log-concave probability distributions encompasses Lipschitz-perturbations of log-concave measures.

The weak semiconvexity profile was introduced by \cite{IntroSemiConvProfile} together with the probabilistic notion of \emph{coupling by reflection}, and further used in \cite{Eberle} to establish exponential contraction for diffusion operators. 
More recently, the propagation of weak semiconvexity along generalized heat flows was systematically studied in \cite{WeakConc}, which further deduced many useful consequences -- including building Lipschitz transport maps from the Gaussian towards strictly asymptotically log-concave measures \cite[Theorem 1.4]{WeakConc}.
A partial analog of these results had been established in \cite[Theorem 4.1]{SpectralGap} by PDE methods to prove the fundamental gap conjecture.

Exploiting the strict asymptotic log-concavity of the reference measure, the main result of this article is the following extension of the Dacorogna-Moser construction to the entire $\R^d$.

\begin{thm}[Dacorogna-Moser construction on $\R^d$] \label{Theorem Dacorogna-Moser}
Assume that $\mu$ is a strictly asymptotically log-concave measure on $\R^d$ such that $\nabla^j\log\mu$ is Lipschitz-continuous for every $1\le j \le n+1$, $n \geq 1$. 
Let $\varphi \in W^{n,\infty}(\R^d,\R)$ be such that $\int_{\R^d}\varphi\d\mu = 0$, $\lVert \varphi\rVert_{W^{n,\infty}} \le M$ and $\varphi \ge -1 + \delta$, for some $M, \delta > 0$. 
Then, there exists an invertible $T : \R^d \rightarrow \R^d$ such that  $(1+\varphi)\cdot\mu = T_{\#}\mu$, $T - \mathrm{Id}, \ T^{-1} - \mathrm{Id} \in W^{n,\infty}(\R^d, \R^d)$ and 
\begin{equation*}
    \lVert T - \mathrm{Id} \rVert_{W^{n,\infty}} + \lVert T^{-1} - \mathrm{Id} \rVert_{W^{n,\infty}} \le C \lVert \varphi \rVert_{W^{n,\infty}},
\end{equation*} where $C>0$ only depends on the bounds on $\log \mu$, the weak semiconvexity profile $\kappa_{-\log\mu}, \ n$ and the constants $M$, $\delta$.
\end{thm}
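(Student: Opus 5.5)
We follow the Dacorogna--Moser scheme with the interpolation $\rho_s = (1+s\varphi)\cdot\mu$, $s\in[0,1]$. Since $\varphi\ge -1+\delta$, we have $1+s\varphi\ge\delta$ for all $s\in[0,1]$, so $\rho_s$ is a probability measure with density bounded below by $\delta\mu$. We look for a velocity field of the form
\[
v_s = -\frac{\nabla\psi}{1+s\varphi},
\]
where $\psi$ solves \eqref{first time new PDE}, i.e.\ $\mathcal L\psi := \Delta\psi+\nabla\log\mu\cdot\nabla\psi=\varphi$ with $\int\psi\,\d\mu=0$. The point is that $\nabla\cdot(\mu\nabla\psi)=\mu\,\mathcal L\psi$. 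Hence
\[
\nabla\cdot(\rho_s v_s)=-\nabla\cdot(\mu\nabla\psi)=-\varphi\mu=-\partial_s\rho_s,
\]
so \eqref{eq:transportDM} holds. Let $X_s$ be the flow of $v_s$ and set $T:=X_1$. If $v\in W^{n,\infty}$ uniformly in $s$, then $X_s$ is a global $C^{n-1,1}$ diffeomorphism. Uniqueness for the continuity equation with Lipschitz field gives $(X_1)_\#\mu=\rho_1$. The inverse $T^{-1}$ is the flow of $-v_{1-s}$, run backwards.

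\textbf{Step 1: solvability and $L^\infty$ gradient bound.} The heart of the proof is the estimate
\[
\|\nabla\psi\|_{W^{n-1,\infty}}\lesssim\|\varphi\|_{W^{n,\infty}}.
\]
We represent $\psi=-\int_0^\infty P_t\varphi\,\d t$, where $P_t$ is the semigroup of the Langevin diffusion $\d Y_t=\nabla\log\mu(Y_t)\,\d t+\sqrt2\,\d B_t$, which is reversible with respect to $\mu$.

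Strict asymptotic log-concavity of $\mu$ together with the Lipschitz bound on $\nabla\log\mu$ gives, via reflection coupling as in \cite{Eberle}, an $L^1$-Wasserstein contraction
\[
W_1(\delta_xP_t,\delta_yP_t)\le Ce^{-ct}|x-y|.
\]
This yields $\|\nabla P_t\varphi\|_\infty\le Ce^{-ct}\|\nabla\varphi\|_\infty$. Since $\int\varphi\,\d\mu=0$, the time integral converges. It follows that $\psi$ is a Lipschitz solution, centered automatically, with $\|\nabla\psi\|_\infty\le C\|\nabla\varphi\|_\infty$. Uniqueness among functions of controlled growth follows from ergodicity of the diffusion.

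\textbf{Step 2: higher derivatives, which is the main obstacle.} We differentiate the equation. The gradient $w=\nabla\psi$ satisfies
\[
\mathcal L w+\nabla^2\log\mu\, w=\nabla\varphi.
\]
This is a vector Schrödinger-type equation. The potential $\nabla^2\log\mu$ is negative only asymptotically, so a maximum principle cannot be applied directly. Instead we work at the level of the semigroup: $\nabla P_t\varphi$ is expressed through the derivative flow (Bismut) or through coupling, and the first-order estimate of Step 1 gives exponential decay. For higher order $j\le n$ we differentiate $P_t\varphi$ and estimate $\nabla^jP_t\varphi$ inductively.

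We split time into two regimes:
\begin{itemize}
\item on $[0,1]$, we use short-time bounds for the derivative flows, which require $\nabla^{j}\log\mu$ Lipschitz for $j\le n+1$;
\item for $t\ge1$, we use the semigroup property $\nabla^jP_t=\nabla^jP_1P_{t-1}$, combined with the exponential decay of $\|\nabla P_{t-1}\varphi\|_\infty$ and a regularization estimate $\|\nabla^jP_1 f\|_\infty\lesssim\|\nabla f\|_\infty$.
\end{itemize}
Integrating in $t$ gives $\|\nabla^{j}\psi\|_\infty\lesssim\|\varphi\|_{W^{j-1,\infty}}$ for $j\le n+1$. Obtaining uniform-in-time control here is the hardest part, and it is where the reflection coupling enters.

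\textbf{Step 3: bounds on the flow.} By the Leibniz rule and the lower bound $1+s\varphi\ge\delta$, we get
\[
\sup_{s\in[0,1]}\|v_s\|_{W^{n,\infty}}\le C(\delta,M)\,\|\varphi\|_{W^{n,\infty}}.
\]
The bounds on $X_s-\mathrm{Id}$ then follow by Grönwall's inequality on the ODEs for $X_s$ and its derivatives, since $\nabla^jX$ satisfies a linear equation driven by $\nabla^j v$. The resulting constants depend on $M$ through $\exp(C\|v\|_{W^{1,\infty}})$, which is bounded in terms of $M$. Since $X_s-\mathrm{Id}=\int_0^s v_r(X_r)\,\d r$, all norms of $X_1-\mathrm{Id}$ are linear in $\|\varphi\|_{W^{n,\infty}}$ with constants depending on $M,\delta$. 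The same argument applies to the reversed flow, which gives the bound on $T^{-1}-\mathrm{Id}$.
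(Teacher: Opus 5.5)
Your proposal is correct and has the same overall architecture as the paper's proof. Both use the interpolation $\rho_s=(1+s\varphi)\cdot\mu$ with $v_s=-\nabla\psi/(1+s\varphi)$, the representation $\psi=-\int_0^\infty P_t\varphi\,\d t$, reflection coupling for large times, a short-time gradient bound, and Gr\"onwall estimates for the flow and its inverse.

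The genuine difference is in how you obtain \emph{higher-order} decay of $P_t\varphi$.

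\begin{itemize}
\item The paper argues by induction on the order of differentiation. It differentiates the backward equation and represents $\partial_I v_t$ through It\^o's formula with source terms $\partial_J b\cdot\nabla\partial_{I\setminus J}v_s$. It then re-uses the coupling bound $\mathbb{P}(X^{t,x}_s\neq\hat X^{t,y}_s)\lesssim e^{-\beta(s-t)}|x-y|$ at every level, splitting the time integral into three pieces and degrading the exponential rate at each step.
\item You prove exponential decay only at first order, via a $W_1$-contraction. This uses $\|\nabla\varphi\|_\infty$ and hence $n\ge 1$, whereas the paper's coupling-probability bound needs only $\|\varphi\|_\infty$. You then propagate the decay to all orders through $\nabla^jP_t=\nabla^jP_1P_{t-1}$.
\end{itemize}

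Your route is cleaner. It keeps a single rate, and it shows that for $t\ge 1$ all derivatives are controlled by $\|\nabla\varphi\|_\infty$ alone. The price is that everything rests on the unit-time \emph{multi}-derivative smoothing bound $\|\nabla^jP_1g\|_\infty\lesssim\|\nabla g\|_\infty$ for $j\le n+1$, which you state without proof. It does hold under the hypotheses. One way to get it is to iterate, over subintervals of $[0,1]$, a homogeneous one-derivative gain $\|\nabla^{k+1}P_sg\|_\infty\lesssim s^{-1/2}\sum_{i=1}^k\|\nabla^ig\|_\infty$. That gain follows from derivative flows plus one Bismut-type integration by parts, and it uses the Lipschitz bounds on $\nabla^j\log\mu$ for $j\le n+1$. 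The bound must be homogeneous, with no $\|g\|_\infty$ term, because $\|P_{t-1}\varphi\|_\infty$ does not decay.

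Two minor points:
\begin{itemize}
\item The estimate you call the heart of the proof, $\|\nabla\psi\|_{W^{n-1,\infty}}\lesssim\|\varphi\|_{W^{n,\infty}}$, is one derivative short of what Step 3 uses, since $\nabla^n v_s$ involves $\nabla^{n+1}\psi$. Your Step 2 does deliver $j\le n+1$, so this is only a misstatement.
\item On $(0,1]$ the top-order bound necessarily blows up, $\|\nabla^{n+1}P_t\varphi\|_\infty\lesssim t^{-1/2}\|\varphi\|_{W^{n,\infty}}$, because $\varphi$ has only $n$ derivatives. You should state explicitly that this singularity is integrable; that is exactly the role of the Priola--Wang estimate in the paper.
\end{itemize}
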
 

The proof can be found in Subsection \ref{Proofs gen DM}. This result leverages the estimates on the PDE \eqref{first time new PDE} that are proved
in Proposition \ref{Lemma PDE estimates}. In particular, we prove that the unique solution $\psi \in H^1(\mu)$ of \eqref{first time new PDE} satisfies $\lVert \nabla\psi\rVert_{W^{n,\infty}} \leq C \lVert \varphi \rVert_{W^{n,\infty}}$. 
This bound further relies on short- and long-time regularization estimates for the semi-group generated by the operator $\nabla\log\mu \cdot \nabla + \Delta$, see Proposition \ref{Lemma PDE estimates}. 

\begin{rmk}[Additional regularity] \label{rem:addREg}
Comparing to the original  Dacorogna-Moser result \cite[Theorem 1]{DacorognaMoser}, we would expect $T$ to have one more bounded derivative than $\varphi$.
The Schauder regularity theory indeed provides \emph{local} bounds for second-order derivatives of solutions of elliptic equations, which could be leveraged to prove additional \emph{local} regularity for $T$.
On the contrary, the elliptic regularity estimate proved in Proposition \ref{Lemma PDE estimates} is \emph{global}, as well as the result of Theorem \ref{Theorem Dacorogna-Moser}, which is a key consequence of the asymptotic log-concavity of $\mu$.
We do not expect higher-order \emph{global} uniform estimates to be true in general -- except for fractional regularity.
\end{rmk}

Since \cite{DacorognaMoser},
the original Dacorogna-Moser construction has been used in many important works including Brenier's variational model for incompressible fluids \cite{Brenier1999MinimalGO}, where it plays a key role in building competitors to perform calculus of variation arguments. 
Adapting the approach from \cite{Brenier1999MinimalGO}, a key application of Theorem \ref{Theorem Dacorogna-Moser} is to compute variations and derive optimality conditions for the problem of geodesics in the space of couplings of probability measures, which we introduce below, the precise statements of our results being presented later in Section \ref{Statements geodesics}. 
Once again, a key assumption will be the strict asymptotic log-concavity of the coupled marginals $\mu$ and $\nu$.

\subsection{Geodesics in the space of couplings} \label{Subsection intro geodesics}

Let us fix $\mu, \nu$ in the Wasserstein space $\mathcal{P}_2(\R^d)$ of probability measures with finite second moment.
Let us further consider $\rho_0, \rho_1 \in \mathcal{P}_2 ( \R^d \times \R^d)$.
Recalling the Benamou-Brenier formula \cite{BenamouBrenier}, the Wasserstein distance between $\rho_0$ and $\rho_1$ can be written as
\begin{equation} \label{eq:BB}
\mathcal{W}_2^2(\rho_0,\rho_1) = \inf\bigg\{ \int_0^1 \frac12 \|v_t\|_{L^2(\rho_t)}^2\,\d t : \partial_t \rho + \nabla \cdot (\rho v)=0, \, (\rho \vert_{t=0},\rho \vert_{t=1}) = (\rho_0,\rho_1) \bigg\},
\end{equation}
the continuity equation being understood in the weak sense and the factor $\frac12$ being here for later convenience.
This formula allows for viewing $\mathcal{P}_2 ( \R^d \times \R^d)$ as a formal infinite-dimensional Riemannian manifold, as developed in \cite{otto2001geometry}.

Let us now assume that $\rho_0, \rho_1$ belong to the sub-space $\Pi(\mu,\nu) \subset \mathcal{P}_2 ( \R^d \times \R^d)$ of measures with first marginal $\mu$ and second marginal $\nu$. 
If we restrict the minimization \eqref{eq:BB} to curves that stay in $\Pi(\mu,\nu)$ at all times, then we get a formal notion of submanifold distance on $\Pi(\mu,\nu)$ given by 
\begin{equation} \label{def distance}
d_{\Pi(\mu,\nu)}^2 (\rho_0,\rho_1) = \inf\bigg\{ \int_0^1 \frac12 \|v_t\|_{L^2(\rho_t)}^2\,\d t : \partial_t \rho + \nabla \cdot (\rho v)=0, \ (\rho \vert_{t=0},\rho \vert_{t=1}) = (\rho_0,\rho_1), \rho_t \in \Pi(\mu,\nu) \bigg\}.
\end{equation}
We say that $(\rho_t)_{t \in [0,1]}$ is a \emph{geodesic in $\Pi(\mu,\nu)$} between $\rho_0$ and $\rho_1$ if it is a minimizer in \eqref{def distance}.

The problem of geodesics in the space of couplings was first introduced in \cite[Section 2.6]{ConfLackPal}, which was motivated by the construction of gradient flows in the induced geometry of $\Pi(\mu,\nu)$.
In particular, \cite{ConfLackPal} constructed dynamics that converge exponentially fast to the unique solution of the Schr{\"o}dinger problem between $\mu$ and $\nu$, while staying in $\Pi(\mu,\nu)$ at every time when started from there. 
Although the long-time behavior of these dynamics shares some similarities with gradient flows in $(\Pi(\mu,\nu),d_{\Pi(\mu,\nu)})$, they do not satisfy the rigorous requirements for being gradient flows.
A major difficulty to build gradient flows is indeed the lack of geodesic convexity of $\Pi(\mu,\nu)$ observed in \cite[Proposition 2.2]{ConfLackPal}, so that the standard theory of gradient flows \cite{ambrosioGradientFlowsMetric2008} -- which usually requires geodesic convexity for the functional generating the flow -- cannot be directly applied. 
This motivated the open question \cite[Open Problem 6]{ConfLackPal} of describing geodesics in $\Pi(\mu,\nu)$.
The existence of geodesics was studied in \cite[Proposition 3.1, Proposition 3.6]{LouisLacker} under the assumption that $\mu$, $\nu$ are strongly log-concave and that $\log \mu$, $\log \nu$ have bounded Hessian.
In Section \ref{Statements geodesics} below, we extend this existence result to marginals that are merely strictly asymptotically log-concave, before giving a complete PDE description of the geodesics answering \cite[Open Problem 6]{ConfLackPal}.

By studying the optimality conditions for \eqref{def distance}, we prove in Theorem \ref{Existence of the Lagrange multiplier} existence for Lagrange multipliers $\alpha$ and $\beta$ associated to the marginal constraints, which formally satisfy
\begin{equation} 
    \begin{cases}\label{formal optimality conditions}
        \partial_t \rho + \nabla \cdot (\rho \nabla \varphi) = 0\\
        \alpha \oplus \beta = \partial_t \varphi + \frac{1}{2}|\nabla\varphi|^2,
    \end{cases}
\end{equation} where $\varphi$ is a multiplier associated to the continuity equation. Differentiating the second equation and combining it with the first one, we can formally write the conservation of momentum
\begin{equation} \label{weaker opt cond}
    \rho \nabla(\alpha \oplus \beta) = \partial_t(\rho \nabla\varphi) + \nabla \cdot (\rho(\nabla\varphi \otimes \nabla\varphi)).
\end{equation}
The above computations are only formal, as we assumed existence and smoothness for the Lagrange multipliers $\alpha$ and $\beta$, which does not hold in general. 
Under strict asymptotic log-concavity and regularity assumptions on the marginals, Theorem \ref{Existence of the Lagrange multiplier} below gives a rigorous meaning to $\alpha$ and $\beta$ as elements in the dual of some suitable functional spaces, which are rigorously defined in Definition \ref{Perturbations and modified problem}.
Under additional regularity assumptions, Theorem \ref{Optimality condition for the Lagrange multiplier} then gives a rigorous meaning to the optimality conditions \eqref{weaker opt cond} for $\alpha$ and $\beta$, deducing uniqueness for the multipliers at the same time. 
These results rely on calculus of variation arguments that crucially leverage our extended Dacorogna-Moser construction Theorem \ref{Theorem Dacorogna-Moser} to build competitors, similarly to how Brenier's work \cite{Brenier1999MinimalGO} used the original bounded-domain Dacorogna-Moser construction.

The geometric structure of this problem is reminiscent of Brenier's multiphase formulation of Euler's incompressible equations \cite{brenier1989least,Brenier1999MinimalGO}, which is itself a relaxation of Arnold's formulation \cite{Arnold} of Euler's equations for incompressible fluids.
We recall that Arnold's variational problem
is given by 
\begin{equation*}
    \inf_{(X_t)_{t \in [0,1]}} \int_0^1 \int_{\mathbb{T}^d} \frac12|\dot X_t(a)|^2\d a \d t,
\end{equation*} where the infimum is taken over absolutely continuous curves of diffeomorphisms $X_t : \mathbb{T}^d \rightarrow \mathbb{T}^d$ of the torus that preserve orientation and the Lebesgue measure (incompressibility constraint) at any time, with fixed endpoints at $t=0$ and $t=1$.
A pressure field can then be introduced as a Lagrange multiplier for the incompressibility constraint, and the formal optimality conditions then correspond to a Lagrangian formulation of Euler's equations.
This problem is by now known as \emph{incompressible optimal transport}, and
we refer to \cite{ambrosio2010lecture,daneri2012variational} for lecture notes on these variational models as well as to the textbook \cite[Chapters 2-4]{brenier2020examples}.

Arnold's problem is notoriously difficult as minimization is performed over a set that is not closed nor convex, and a minimizer does not exist in general \cite{SDiff87,SDiff94}.
Therefore, \cite{Brenier1999MinimalGO} introduced the convex multiphase relaxation \begin{equation} \label{eq:Bremutli}
    \inf_{(\rho^a,v^a)_{a \in \mathbb{T}^d}}\bigg\{ \int_{\mathbb{T}^d} \int_0^1\int_{\mathbb{T}^d} \frac12| v^a_t |^2 \d \rho^a_t \d t \d a, \; \ \int_{\mathbb{T}^d} \rho_t^a \d a = \mathcal{L}_{\mathbb{T}^d} \  \forall t \in [0,1]\bigg\},
\end{equation} 
where $(\rho^a,v^a)$ satisfies the continuity equation for almost every $a \in \mathbb{T}^d$, $\mathcal{L}_{\mathbb{T}^d}$ is the Lebesgue measure on $\mathbb{T}^d$ and the endpoints at $t=0$ and $t=1$ are fixed.
Similarly, a pressure field can be introduced as a Lagrange multiplier for the incompressibility constraint, whose properties are further detailed in Section \ref{Literature review} below.

Motivated by the classical entropic regularization of optimal transport and its connection with the Schr{\"o}dinger bridge problem \cite{leonard2013survey}, \cite{EntropicInt} introduced an analogous regularization of Brenier's incompressible optimal transport, which is known as the \emph{Br{\"o}dinger problem} -- in particular, this regularization restores uniqueness for the minimizer and can be seen as \emph{the Schrödinger bridge problem on the space of couplings}; we refer the reader to Section \ref{Statements geodesics} and Appendix \ref{Appendix} for a detailed presentation.

Exploiting the similarity between the constrained minimization problems \eqref{def distance} and  \eqref{eq:Bremutli}, we introduce an analog regularization for \eqref{def distance} in Section \ref{Statements geodesics}, which we study by adapting the methods developed in \cite{Baradatoptimality,SmallNoiseLimit} for the Br{\"o}dinger problem. As for the Br{\"o}dinger problem, we could have studied an entropic pathwise formulation, which we mention in the Appendix. We thus prove existence and uniqueness for Lagrange multipliers in both regularized and un-regularized settings, and we derive the related optimality conditions. The precise connection between the regularization and the original geodesic problem is given by a $\Gamma$-convergence result in Theorem \ref{Existence of geodesics}, as well as weak-$\star$ convergence for the multipliers in Proposition \ref{Proposition weak star convergence}.

\subsection{Literature review} \label{Literature review}

Hereafter, we review several contributions related to problems discussed in this article.

\paragraph{Transport maps on bounded domains.}
As previously mentioned, \cite{DacorognaMoser} showed a way to construct regular transport maps between two probability distributions that are supported on bounded sets. 
More precisely,
given a bounded set $\Omega \subset \R^d$ with a $C^{k+3,\alpha}$ boundary $\partial\Omega$, for some integer $k \geq 0$, $\alpha \in (0,1)$, and functions $f,g \in C^{k,\alpha}(\overline{\Omega})$ such that $f,g > 0$ on $\overline{\Omega}$ and $\int f \d x = \int g \d x$, \cite[Theorem 1]{DacorognaMoser} constructs a transport map $T$ from $f \d x$ to $g \d x$ that fixes the boundary $\partial\Omega$, and such that $T, T^{-1} \in C^{k+1,\alpha}(\overline{\Omega}, \R^d)$. 
This result was later improved by relaxing the H{\"o}lder-continuity of densities in \cite{RiviereYe}, allowing for weaker conditions such as bounded or BMO densities. 
In both these works, the dependence of the H{\"o}lder norms on the measure of the bounded set $\Omega$ is explicit, preventing any direct extension of the construction to the whole $\R^d$.
To our knowledge, Theorem \ref{Theorem Dacorogna-Moser} is the first such extension.
Recalling Remark \ref{rem:addREg}, we emphasize that the map $T$ in Theorem \ref{Theorem Dacorogna-Moser} has one less degree of regularity than in the aforementioned works; however, our regularity estimates are \emph{global in space} contrary to these works.

In the recent literature, several strategies have been proposed to build transport maps on the whole $\R^d$, one popular method relying on the time-reversal of diffusion processes.

\paragraph{Time-reversed diffusion and Langevin dynamics.} \cite[Theorem 1.1]{KimMilmanHeatFlow} constructed a 1-Lipschitz transport map from $\mu ( \d x ) = e^{-U (x)} \d x$ to a measure $\nu = e^{-V (x) }\mu  ( \d x ) $, where $V$ is convex and $U \in C^{3,\alpha}_{loc}(\R^d)$, $\alpha > 0$, is a convex function with a specific shape that makes it sufficiently coercive. 
The construction relies on the generalized heat flow
\begin{equation*}
    \partial_t\mu_t = \Delta\mu_t - \nabla U \cdot \nabla\mu_t + \Delta U \mu_t, \quad \mu_0 = \nu,
\end{equation*} 
whose solution converges towards $\mu$ as $t \to +\infty$. 
Equivalently, $\mu_t$ is the law at time $t$ of the overdamped Langevin dynamics \begin{equation} \label{Langevin}
    \d X_t = -\nabla U(X_t)\d t + \sqrt{2}\d B_t, \quad X_0 \sim \nu,
\end{equation} 
driven by a Brownian motion $(B_t)_{t \geq 0}$.
Introducing the flow maps $S_t : \R^d \rightarrow \R^d$ satisfying
\begin{equation} \label{eq KIm Milman}
    \frac{\d}{\d t}S_t = - \nabla\log\frac{\d \mu_t}{\d \mu} \circ S_t, \quad S_0 = \text{Id},
\end{equation} 
we have $S_t \rightarrow S_\infty$ as $t \rightarrow + \infty$ with $\mu_t = (S_t)_{\#}\nu$, so that the desired Lipschitz transport map from $\mu$ to $\nu$ -- or \emph{heat flow map} -- is given by $T:=S_\infty^{-1}$.
We notice that the quantity $v_t:=-\nabla\log(\mu_t/\mu)$ in \eqref{eq KIm Milman} defines a vector field such that $(\mu_t,v_t)_{t \in [0,1]}$ satisfies the continuity equation. 
\cite[Theorem 1.1]{KimMilmanHeatFlow} then proves that $T$ is a contraction, similarly to Caffarelli's result for the optimal transport map \cite{CaffarelliContrOT}. 
This construction was later leveraged to build Lipschitz transport maps under different assumptions.

The Langevin diffusion \eqref{Langevin} has been further used in \cite[Theorem 1]{TransportationLogLipschitz} to show existence for a bi-Lipschitz transport map from an $\alpha$-log concave measure $\mu = e^{-U}$, $\alpha > 0$, to any log-Lipschitz perturbation $\nu = e^{-U + V}$, i.e. $V$ is Lipschitz-continuous, under an additional boundedness assumption on $\nabla^3 U$. An extension to manifold settings was further proposed by \cite{pablo}.

Our extended Dacorogna-Moser construction also leverages the regularization and contraction properties of the Langevin diffusion \eqref{Langevin} to prove PDE  estimates in Proposition \ref{Lemma PDE estimates}. 
Regarding assumptions, we relax the strong log-concavity assumption on $\mu$ into the less restrictive requirement that $\mu$ is strictly asymptotically log-concave.
However, our regularity assumptions are slightly stronger, as we require both $\nabla U$ and $\nabla^2 U$ to be Lipschitz -- see Assumption \ref{Assumption 1} below.
We also need to assume that the log-perturbation $V$ is bounded and Lipschitz-continuous ($\varphi = e^{V}-1$ in our notation), whereas \cite[Theorem 1]{TransportationLogLipschitz} only requires $V$ being Lipschitz. 
This is related to the fact that we consider target measures of the shape $(1+\varphi)\cdot\mu$, and we control the Lipschitz constant directly in terms of $\varphi$, as it is more suitable for the geodesic problem studied in Section \ref{Statements geodesics}.
We furthermore quantify the higher-order regularity of the transport map in terms of the one of $\varphi$.

\paragraph{Gaussian setting and Ornstein-Uhlenbeck diffusion.} 
Various improvements have been proposed for the case where $\mu$ is the standard Gaussian, i.e. $U(x) = \frac{1}{2} \vert x \vert^2$.
\cite[Theorem 1, Theorem 2]{LipschitzPropAlongHeatFlow} proves the Lipschitz-continuity of the heat flow map when the target measure $\nu$ is strongly log-concave, or when $\nu$ is the convolution of the standard Gaussian with a compactly supported measure. 
To do so,
\cite[Lemma 3]{LipschitzPropAlongHeatFlow} establishes an explicit bound on the Lipschitz constant, which depends on the derivative of the velocity field in \eqref{eq KIm Milman} -- also called the \emph{score function} -- thus reducing the problem to bounding $-\nabla^2\log(\mu_t/\mu)$. 
This estimate is exploited in \cite[Theorem 1.4]{WeakConc} to prove existence for a Lipschitz map from the standard Gaussian to any strictly asymptotically log-concave measure $\nu$ such that $\int_0^1r \kappa^-_{-\log\nu}(r)\d r < +\infty$, recalling the definition \eqref{weak semiconvexity profile} of the weak semiconvexity profile. 

More generally, proving Lipschitz bounds on the score function for the Ornstein-Uhlenbeck process has become a common strategy, which is used in \cite[Theorem 1]{BiLipschitzScore}  
to prove existence of bi-Lipschitz transport maps from the standard Gaussian to a general class of measures, which includes Gaussian mixtures and convolutions of Gaussians with compactly supported measures.
Similarly, by studying the regularity of the score function, \cite[Theorem 2]{ArtStepHolderBootstrap} proves that the heat flow map belongs to $C^{\lfloor\beta\rfloor + 1}$ when the target measure $\nu$ is a bounded $\beta$-H{\"o}lder perturbation of the standard Gaussian, allowing for quantifying the regularity of the map depending on the one of the perturbation. 

Our results in Theorem \ref{Theorem Dacorogna-Moser} are less precise than the aforementioned ones on Hölder-perturbations of Gaussians, but our setting is more general as it allows for any initial distribution $\mu$ that is strictly asymptotically log-concave -- not necessarily a Gaussian.
For the sake of completeness, we finally mention the works \cite{FirstStabKimMilm,StabilityNearLipsc}, which prove quantitative stability for the heat-flow map with respect to the target distribution $\nu$, in the Gaussian case.  

\paragraph{Other interpolations.} Many other methods to construct transport maps have been proposed in the recent literature. 
A famous strategy introduced by \cite{FlowMatching} is \emph{flow matching}, which amounts to using the flows of probability measures and vector fields given by \begin{equation*}
\mu_t(x) = \int \mu_t(x|x_1)\d \nu(x_1), \qquad v_t(x) = \int v_t(x|x_1) \frac{\mu_t(x|x_1)}{\mu_t(x)}\d \nu(x_1),
\end{equation*} 
where $\nu$ is the target distribution, $\mu_t(x|x_1)$ is a conditional distribution such that $\mu_0(x|x_1) = \mu(x)$ -- so that $\mu_0 = \mu$ -- and $\mu_1(x|x_1)$ is concentrated around $x_1$ -- so that $\mu_1$ approximates $\nu$. 
A possible choice of conditional measure is the Gaussian
\begin{equation*}
\mu_t(x|x_1) = \mathcal{N}(x|m_t(x_1), \sigma_t^2(x_1)I_d),
\end{equation*} 
for some suitable choice of the conditional mean $m_t$ and variance $\sigma^2_t$.
If $\sigma_t$ only depends on time for simplicity, $\sigma_0 = 1 = 1-\sigma_1$ and $m_t(x_1) = t x_1$, then $\mu_t$ corresponds to the law at time $t$ of the stochastic process 
\begin{equation*}
   X_t = m_t(Y) + \sigma_t \xi, \quad Y \sim \nu, \quad \xi \sim \mathcal{N}(0, I_d), 
\end{equation*}
so that $(X_t)_{t \in [0,1]}$ is a stochastic interpolation between the initial Gaussian distribution and the target measure.
This construction can be extended to other stochastic interpolants between arbitrary distributions, as in \cite{StochInterp} that studies the laws of the process \begin{equation*}
    X_t = m_t(X_0, Y), \quad Y \sim \nu, \quad X_0 \sim \mu,
\end{equation*} 
and further generalized in \cite{VariantStochInterp} as
\begin{equation*}
    X_t = m_t(X_0,Y) + \sigma_t\xi, \quad \xi \sim \mathcal{N}(0,I_d).
\end{equation*} 
For different choices of $m_t$ and $\sigma_t$, \cite{ArtStephanovitchWeakConcFlow} proves dimension-free bounds for the Lipschitz constant of the resulting transport map from the standard Gaussian to a target measure of the form $ e^{- V(x) + W(x)} \d x$, where $V \in C^2(\R^d, \R)$ is $\alpha$-convex, $\alpha > 0$, and $W$ is $\beta$-Hölder, $\beta \in (0,1]$. 
In particular, \cite[Corollary 1]{ArtStephanovitchWeakConcFlow} covers the Lipman flow matching setting, and \cite[Corollary 2]{ArtStephanovitchWeakConcFlow} considers the stochastic-interpolant flow matching setting. 
Furthermore, the aforementioned Ornstein-Uhlenbeck setting can be seen as a specific instance of this type of interpolation for $m_t(x,y) = t y$ and $\sigma_t = \sqrt{1 - t^2}$, and the Lipschitz-continuity of the resulting transport map is proved in \cite[Corollary 3]{ArtStephanovitchWeakConcFlow}. 
The target measure is strictly asymptotically log-concave when $\beta = 1$, and this result overlaps \cite[Theorem 1.4]{WeakConc} in  this case. 
For a general value $\beta \in (0,1)$, neither of these two results implies the other one.

Once again, we emphasize that this setting is different from ours, as the starting measure $\mu$ in Theorem \ref{Theorem Dacorogna-Moser} needs not be a Gaussian, and we study the regularity of the Dacorogna-Moser transport map with respect to the one of the perturbation $\varphi$ in the target measure $(1+\varphi)\cdot\mu$.

\paragraph{Geodesics in the space of couplings.}
The basic references \cite{ConfLackPal,LouisLacker} on the geodesic problem in $\Pi(\mu,\nu)$ have already been discussed in Section~\ref{Subsection intro geodesics}. We therefore focus here on related variational problems that motivate our analysis.

Our study of Lagrange multipliers is inspired by Brenier's multiphase formulation of incompressible Euler equations \cite{Brenier1999MinimalGO}. In that setting, the pressure arises as the Lagrange multiplier associated with the incompressibility constraint and is shown to be unique as a distribution, with $\nabla p \in \mathcal{M}((0,1)\times\mathbb{T}^d)$. The regularity theory was subsequently refined in \cite{RegPressure}, which proves that $\nabla p \in L^2((0,1),\mathcal{M}(\mathbb{T}^d))$ and $p \in L^2((0,1),L^{d/(d-1)}(\mathbb{T}^d))$. Determining the optimal regularity of the pressure remains a challenging open question, see for instance the semiconcavity conjecture in \cite{Brenier2013OptimalReg}. In contrast, our work establishes existence, uniqueness, and rigorous optimality conditions for the Lagrange multipliers associated with the marginal constraints, but does not address their optimal regularity. Whether or not the techniques of \cite{RegPressure} can be adapted to our setting is an interesting open question.

The regularized problem considered in this article is similarly motivated by the entropic regularization of Brenier's formulation. 
This latter regularization was introduced in \cite{EntropicInt} as the \emph{Br\"{o}dinger problem}; it amounts to adding a Fisher-information term to the kinetic energy -- see Appendix \ref{Appendix}. In particular, this regularization restores uniqueness of the minimizer, which does not hold in general for the unregularized Brenier problem. 
Building on this formulation, \cite{Baradatoptimality} establishes existence, uniqueness, and optimality conditions for the pressure field, while \cite{SmallNoiseLimit} proves $\Gamma$-convergence towards Brenier's multiphase problem together with weak-$\star$ convergence of the associated multipliers.

Our approach follows the same general strategy, but substantial modifications are required. In particular, the heat-flow regularization used in \cite{SmallNoiseLimit} does not preserve the constraint set $\Pi(\mu,\nu)$ and therefore cannot be used to construct recovery sequences. Instead, assuming that $\nabla^2\log\mu$ and $\nabla^2\log\nu$ are bounded, we regularize by the gradient flow of the relative entropy $H(\cdot\mid\mu\otimes\nu)$ in $(\mathcal P_2(\mathbb R^d\times\mathbb R^d),\mathcal W_2)$, which leaves $\Pi(\mu,\nu)$ invariant. The required regularization estimates are obtained using the theory of the Schr\"{o}dinger bridge problem on general metric spaces developed in \cite{GenMetricSpace}.

\subsection{Notations} \label{Notations}
We introduce some notations that we will use throughout this article.
\begin{itemize}
    \item We define $\nabla_\mu \cdot \xi := \frac{\nabla \cdot (\mu \xi)}{\mu} = \nabla\log\mu \cdot \xi + \nabla\cdot\xi$, for every $\xi$ such that its derivative is well-defined. In particular, for every $\psi$ such that $\nabla\psi$ and $\nabla^2\psi$ are well-defined, we set $\Delta_\mu \psi := \nabla_\mu \cdot \nabla\psi$.

    This notation is motivated by the fact that $\nabla_\mu \cdot \xi$ is the adjoint of the gradient in $L^2 ( \mu )$.
    \item $H(\cdot|\mu \otimes \nu)$ denotes the relative entropy with respect to $\mu \otimes \nu$, i.e. \begin{equation*}
        H(\rho |\mu\otimes\nu):= \begin{cases} \displaystyle
            \int_{\R^d \times \R^d}\log\frac{\d \rho}{\d \mu \otimes \nu}\d \rho  \quad \text{if} \ \rho \ll \mu\otimes \nu \\
            +\infty \quad \text{otherwise}.
        \end{cases}
    \end{equation*}
    \item $\lambda^-:= \max\{-\lambda,0\}$ denotes the negative part of $\lambda \in \R$.
    \item $\lesssim$ means ``less than or equal to up to a numerical constant''.
    \item For a measure $\mu$ on $\R^d$ and a map $T: \R^d \to \R^d$, we denote by $T_{\#}\mu$ the pushforward of $\mu$ through the transport map $T$, i.e. \begin{equation*}
        (T_{\#}\mu)(A) = \mu(T^{-1}(A)), \quad \forall A \subseteq \R^d \ \text{measurable}.
    \end{equation*}
\end{itemize}

\section{Statement of the main results} \label{Section main results}

In this section, we state our main results on the extended Dacorogna-Moser construction and the geodesic problem.

\subsection{The extended Dacorogna-Moser construction}

Our main result Theorem \ref{Theorem Dacorogna-Moser} requires the following assumption.

\begin{assumption} {~} \label{Assumption 1} 
\begin{enumerate}[label = (\roman*)]
\item $\mu$ is strictly asymptotically log-concave -- see Definition \ref{def asymptotic convexity};
\item $\nabla^j\log\mu$ is well-defined and Lipschitz-continuous for every $j \in \{1, \ldots, n+1\}$ for some $n \geq 0$.
\end{enumerate}

\end{assumption}

As mentioned in Section \ref{sec:intro}, the transport map in Theorem \ref{Theorem Dacorogna-Moser} is constructed from a flow of probability measures $(\rho_s)_{s\in [0,1]}$ between $\mu$ and $(1+\varphi)\cdot\mu$, for a suitable choice of velocity vector field $(v_s)_{s \in [0,1]}$ such that the continuity equation \eqref{eq:transportDM} is satisfied. 
The transport map is then given by the flow generated by the velocity at the terminal time. 
The simplifying assumptions $\rho_s = (1+s\varphi)\cdot\mu$ and $v_s =- \frac{\nabla\psi}{1+s\varphi}$ reduce the continuity equation to solving the equation $\Delta_\mu\psi = \varphi$ -- recalling the $\Delta_\mu$ notation introduced in Section \ref{Notations}. 
Therefore, the desired estimates on the transport map in Theorem \ref{Theorem Dacorogna-Moser} result from estimating the solution of this equation. 
This is the content of the following result, whose proof can be found in Section \ref{Proofs gen DM}. 
For convenience in writing the proof, the result is stated for the operator $\frac{1}{2}\Delta_\mu$ rather than $\Delta_\mu$, but the resulting estimates are equivalent.

\begin{prop}[Parabolic and elliptic PDE estimates]\label{Lemma PDE estimates}
Let $\mu$ be a probability measure on $\R^d$ that satisfies Assumption \ref{Assumption 1} for some $n \geq 0$, and let $\varphi \in W^{n,\infty}(\R^d,\R)$ be such that $\int_{\R^d}\varphi\d\mu = 0$. 
Let $(S_t)_{t \geq 0}$ be the semigroup generated by the operator $\frac{1}{2}\Delta_\mu$ defined in Section \ref{Notations}. Then: \begin{enumerate}[label = (\roman*)]
    \item $\nabla S_t\varphi \in W^{n,\infty}$ for every $t > 0$, and there exist constants $c, \alpha >0$ that only depend on $\mu$ and $n$ such that \begin{equation} 
    \begin{cases}\label{estimates parabolic PDE}
        \lVert \nabla S_t \varphi\rVert_{W^{n,\infty}} \leq c e^{-\alpha t}\lVert \varphi\rVert_{W^{n,\infty}}, \quad \forall t \geq 1, \\
        \lVert \nabla S_t\varphi\rVert_{W^{n,\infty}} \leq \frac{c}{\sqrt{t}}\lVert \varphi\rVert_{W^{n,\infty}}, \quad \forall t \in (0,1].
        \end{cases}
    \end{equation}
    \item The system \begin{equation} \label{system elliptic}
            \frac{1}{2}\Delta_\mu\psi = \varphi , \qquad \int_{\R^d}\psi \d\mu = 0,
    \end{equation} 
    admits a unique solution $\psi \in H^1(\mu)$, which is given by $\psi = - \int_0^{+\infty}S_t\varphi\d t$. Furthermore, $\nabla\psi \in W^{n,\infty}$ and \begin{equation} \label{estimates elliptic PDE}
        \lVert \nabla\psi\rVert_{W^{n,\infty}} \leq C\lVert \varphi\rVert_{W^{n,\infty}},
    \end{equation} where $C > 0$ is a constant that only depends on $\mu$ and $n$.
    \end{enumerate} 
\end{prop}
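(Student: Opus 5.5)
We work with the Langevin diffusion $\d X_t = \frac12\nabla\log\mu(X_t)\,\d t + \d B_t$, whose generator is $\frac12\Delta_\mu$, so that $S_t\varphi(x) = \mathbb{E}[\varphi(X^x_t)]$.

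\textbf{Long-time contraction.} The starting point is the reflection-coupling contraction of Eberle \cite{Eberle}. Because $-\log\mu$ is strictly asymptotically convex and $\nabla\log\mu$ is Lipschitz, the profile $\kappa_{-\log\mu}$ is bounded below and eventually positive. One can then build a concave increasing distance $f$, comparable to $r\mapsto r$, such that
\[
\mathbb{E} f(|X^x_t-X^y_t|)\le e^{-\alpha t} f(|x-y|).
\]
This yields $\mathcal W_1(\delta_x S_t,\delta_y S_t)\le c e^{-\alpha t}|x-y|$, hence
\[
\|\nabla S_t\varphi\|_\infty\le c e^{-\alpha t}\|\nabla\varphi\|_\infty,
\]
which is the case $n=0$ in gradient form. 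For $n=0$ with only $\varphi\in L^\infty$, we first need the short-time bound below, and then use $S_t = S_{t-1}S_1$.

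\textbf{Short-time smoothing.} For $t\le 1$ we use Bismut--Elworthy--Li:
\[
\nabla S_t\varphi(x)\cdot h = \frac1t\,\mathbb{E}\Big[\varphi(X_t^x)\int_0^t \langle J_s h,\d B_s\rangle\Big],
\]
where $J_s$ is the derivative flow. We have $|J_s|\le e^{Ls}$ with $L=\mathrm{Lip}(\nabla\log\mu)/2$. This gives $\|\nabla S_t\varphi\|_\infty\lesssim t^{-1/2}\|\varphi\|_\infty$. Combined with the contraction applied to $S_{t-1}(S_1\varphi)$, it gives both estimates in (i) for $n=0$.

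\textbf{Higher derivatives.} We differentiate the flow. The $k$-th derivative $\nabla^k X_t^x$ solves a linear SDE whose coefficients involve $\nabla^j\log\mu$ for $j\le k+1$. By Assumption \ref{Assumption 1} these are bounded, so on $[0,1]$ all $\nabla^k X_t$ are bounded in $L^p$ for $k\le n+1$. Applying the chain rule to $\mathbb{E}[\varphi(X_t^x)]$ bounds $\nabla^{k}S_t\varphi$ for $k\le n$ by $\|\varphi\|_{W^{n,\infty}}$. The $(n+1)$-th derivative is obtained by a Bismut-type integration by parts on the last derivative, which produces the factor $t^{-1/2}$. This gives the short-time estimate.

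For $t\ge1$ we write $S_t\varphi=S_{1/2}(S_{t-1/2}\varphi)$ and apply the short-time $W^{n+1,\infty}$ bound with $\|\cdot\|_{W^{1,\infty}}$ input, gaining derivatives through differentiation of the flow. The exponential decay then comes from $\|\nabla S_{t-1/2}\varphi\|_\infty\le ce^{-\alpha t}\|\varphi\|$. Here one must control $S_{t-1/2}\varphi$ itself and not only its gradient. Since $\int\varphi\,\d\mu=0$, we write $S_s\varphi(x)=\int (S_s\varphi(x)-S_s\varphi(y))\,\mu(\d y)$, which is bounded by $\|\nabla S_s\varphi\|_\infty\,(|x|+m_1)$. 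To avoid this growth, it is enough to note that the higher-derivative estimates for $S_{1/2}g$ only require $\nabla g$ bounded: differentiating $\mathbb{E}[g(X^x_{1/2})]$ once puts $\nabla g$ inside, and the chain rule handles the rest. The main obstacle I expect is keeping these higher-order short-time bounds uniform. This needs moment bounds on the derivative flows with constants depending only on the Lipschitz norms of $\nabla^j\log\mu$, which hold because the coefficients are globally bounded.

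\textbf{Elliptic problem (ii).} The bound (i) gives $\int_0^\infty\|\nabla S_t\varphi\|_{W^{n,\infty}}\,\d t<\infty$, since $t^{-1/2}$ is integrable near $0$ and $e^{-\alpha t}$ at infinity. Moreover, $\mu$ satisfies a Poincaré inequality, since it is a bounded perturbation of a strongly log-concave measure, or directly from the contraction. Hence $\|S_t\varphi\|_{L^2(\mu)}\le e^{-\alpha' t}\|\varphi\|_{L^2(\mu)}$, and $\psi=-\int_0^\infty S_t\varphi\,\d t$ converges in $H^1(\mu)$, has mean zero, and satisfies
\[
\tfrac12\Delta_\mu\psi=-\int_0^\infty\partial_tS_t\varphi\,\d t=\varphi .
\]
Uniqueness follows since $\frac12\Delta_\mu w=0$ with $w\in H^1(\mu)$ gives $\int|\nabla w|^2\,\d\mu=0$, and the mean-zero condition forces $w=0$. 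Finally, integrating (i) in time gives \eqref{estimates elliptic PDE}.
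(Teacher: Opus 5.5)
Your route differs from the paper's. The paper works with the backward equation $\partial_t v_t+\frac12\Delta_\mu v_t=0$, $v_T=\varphi$. It uses two estimates that only require \emph{bounded} data: the reflection-coupling bound $\mathbb P(X^{t,x}_s\neq \hat X^{t,y}_s)\le a|x-y|/(e^{\beta(s-t)}-1)$ of Eberle--Zimmer for long times, and the Priola--Wang gradient bound for short times. Higher derivatives are handled by induction on the order. Differentiating the equation gives a Duhamel representation of $\partial_I v_t$ with source terms $\partial_J b\cdot\nabla\partial_{I\setminus J}v_s$, where $b=\frac12\nabla\log\mu$. These terms are bounded and already decaying by the induction hypothesis, so both estimates apply term by term after splitting the time integral. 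Your $n=0$ argument (Eberle's $\mathcal W_1$ contraction, Bismut--Elworthy--Li, and $S_t=S_{t-1}S_1$), your short-time higher-order sketch, and your part (ii) are fine.

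The gap is in the long-time higher-order step. You need $\|\nabla^{k+1}S_{1/2}g\|_\infty\le C\|\nabla g\|_\infty$ for $g=S_{t-1/2}\varphi$ and $1\le k\le n$, and you justify it by saying that after one differentiation ``the chain rule handles the rest''. It does not. Differentiating $\mathbb E[\nabla g(X^x_{1/2})J^x_{1/2}]$ further produces $\nabla^2 g,\dots,\nabla^{k+1}g$, i.e.\ higher derivatives of $S_{t-1/2}\varphi$. Their exponential decay is exactly what you are proving, so the argument is circular. Your short-time bound does not supply this either: it gains one derivative ($\nabla^{k+1}$ from $W^{k,\infty}$ data), not $k$ derivatives from Lipschitz data.

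You can repair this with your own tools by inducting on $k$. The Bismut integration by parts on the top-order term gives, for $k\ge1$, the homogeneous bound $\|\nabla^{k+1}S_{1/2}h\|_\infty\le C\sum_{j=1}^{k}\|\nabla^j h\|_\infty$. No $\|h\|_\infty$ appears, because the first derivative falls on $h$; this also takes care of the linear growth of $S_{t-1/2}\varphi$. Apply it to $h=S_{t-1/2}\varphi$ with the induction hypothesis $\|\nabla^j S_s\varphi\|_\infty\le c\,e^{-\alpha s}\|\varphi\|_{W^{n,\infty}}$ for $j\le k$ and $s\ge 1/2$ (on $[1/2,1]$ this is your short-time bound). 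This yields the decay of $\nabla^{k+1}S_t\varphi$ for $t\ge1$. Alternatively, write $S_{1/2}$ as a product of $n$ factors and gain one derivative per factor. Either way you need Malliavin differentiability of the Jacobian flow, hence $\nabla^2 b$ bounded; this holds because $k\ge1$ forces $n\ge1$.

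Once repaired, your proof is a semigroup-factorization argument resting on derivative-flow moments and Malliavin integration by parts. The paper's Duhamel induction never differentiates the flow and only applies coupling and gradient estimates to bounded functions.
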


\begin{rmk}[Explicit constants]
The constants that appear in the estimates \eqref{estimates parabolic PDE}-\eqref{estimates elliptic PDE} can be made explicit, even though their expression is quite heavy -- see \cite[Theorem 1]{EberleZimmer} and \cite[Theorem 3.4]{PriolaWangEst}. 
These expressions depend on the weak log-concavity profile $\kappa_{-\log\mu}$ of the reference measure.
\end{rmk}

The proof of \eqref{estimates parabolic PDE} relies on the Feynman-Kac stochastic representation formula and coupling by reflection, leveraging the estimates proved in \cite{PriolaWangEst} and \cite{EberleZimmer}.
The bound \eqref{estimates elliptic PDE} follows, as we can verify that the solution is formally given by $\psi = -\int_0^\infty S_t\varphi\d t$ and then leverage the estimates on $S_t\varphi$.

\begin{rmk}[Regularity of the profile] \label{rem:profile}
To apply the estimates \cite[Theorem 1]{EberleZimmer} and \cite[Theorem 3.4]{PriolaWangEst}, we also need to assume \begin{equation*}
        \int_0^1r\kappa_{-\log\mu}^-(r)\d r < +\infty,
    \end{equation*} but this is implied by (ii) in \ref{Assumption 1}, because $\nabla\log\mu$ is Lipschitz-continuous.
\end{rmk}

\subsection{The geodesic problem and its regularization} \label{Statements geodesics}

In this section, we state our main results about the geodesic problem, the proofs being deferred to Section \ref{proofs geodesic}. First, we introduce a regularization of the geodesic problem using the relative Fisher information, taking inspiration from the regularization of Brenier's multiphase formulation of incompressible fluids given by the Br{\"o}dinger's problem \cite{EntropicInt}. 

\begin{defn}[The regularized problem] \label{Def reg functional and reg prob}
Let $\rho_0, \rho_1 \in \Pi(\mu,\nu)$. For every $\varepsilon \in [0,1]$, we define \begin{equation*} 
    \mathcal{A}_{\varepsilon}(\rho):= \inf\bigg\{\frac{1}{2}\int_0^1\int_{\R^d \times \R^d}|v_t|^2\d\rho_t\d t + \frac{\varepsilon^2}{8}\int_0^1\int_{\R^d \times \R^d}\bigg\lvert \nabla\log\frac{\d \rho_t}{\d \mu\otimes\nu}\bigg\rvert^2\d\rho_t \d t, \ \partial_t\rho + \nabla\cdot(\rho v) = 0\bigg\},
\end{equation*} and we consider the minimization problem \begin{equation} \label{def regularization}
    \inf\big\{\mathcal{A}_{\varepsilon}(\rho), (\rho_t)_{t \in [0,1]} \subset \Pi(\mu,\nu), (\rho\vert_{t=0}, \rho\vert_{t=1}) = (\rho_0,\rho_1)  \big\}.
\end{equation}
\end{defn}

We refer to Appendix \ref{Appendix} for an alternative formulation motivating this regularized problem, which can be seen as \emph{the Schrödinger bridge problem on the space of couplings}.

\begin{rmk}[Geodesic problem]
When $\varepsilon = 0$, the minimization problem \eqref{def regularization} is the geodesic problem \eqref{def distance}.
\end{rmk}

For both the geodesic problem and its regularization \eqref{def regularization}, we prove finiteness and existence of minimizers, and we show convergence as $\varepsilon$ tends to zero. 
In particular, the following result establishes existence of geodesics in the strict asymptotic log-concavity setting and proves $\Gamma$-convergence of the regularized problem \eqref{def regularization} towards the geodesic problem. 

\begin{thm}[Existence of geodesics and $\Gamma$-convergence] \label{Existence of geodesics}
Suppose that $\mu$ and $\nu$ satisfy Assumption \ref{Assumption 1} for $n \ge 0$, and let $\rho_0, \rho_1 \in \Pi(\mu,\nu)$. 
Then, the problem \eqref{def regularization} with $\varepsilon = 0$ admits at least one minimizer. Furthermore, assuming that $H(\rho_0|\mu\otimes\nu), H(\rho_1|\mu\otimes\nu) < +\infty$, then for every $\varepsilon \in (0,1]$ the minimization problem \eqref{def regularization} is finite, it admits a unique minimizer and \begin{equation*}
    \Gamma - \lim_{\varepsilon \to 0}\mathcal{A}_{\varepsilon} + i_{01} = \mathcal{A}_{0} + i_{01},
\end{equation*} for the uniform convergence in $C([0,1],\Pi(\mu,\nu))$ and for the pointwise-in-time convergence with respect to the topology of weak convergence of measures on $\Pi(\mu,\nu)$, where $i_{01}(\gamma) = 0$ if $(\gamma_0,\gamma_1) = (\rho_0,\rho_1)$ and $i_{01}(\gamma) = +\infty$ otherwise.
\end{thm}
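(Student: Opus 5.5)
The plan is the direct method for existence, followed by a liminf and a recovery-sequence argument for the $\Gamma$-limit.

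\emph{Existence for $\varepsilon=0$.} First I check that the infimum in \eqref{def regularization} is finite by building an admissible curve. Under strict asymptotic log-concavity, the construction of \cite[Theorem 1.4]{WeakConc}, or equivalently the Langevin dynamics with drift $\nabla\log(\mu\otimes\nu)$, shows that $\mu\otimes\nu$ satisfies a log-Sobolev inequality. The gradient flow of $H(\cdot|\mu\otimes\nu)$ started at $\rho_i$ has finite action on any interval $[\tau,\infty)$ and stays in $\Pi(\mu,\nu)$, because the Fokker--Planck equation with product reference measure preserves both marginals. Concatenating $\rho_0\to\mu\otimes\nu$ and $\mu\otimes\nu\to\rho_1$ gives a competitor. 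An alternative is the Dacorogna--Moser maps of Theorem \ref{Theorem Dacorogna-Moser}, applied fibrewise with $(1+s\varphi)$ interpolation, since they are Lipschitz. The small-time part of the heat-flow route is delicate, so for the $\varepsilon=0$ case I would instead take a $\mathcal W_2$-geodesic and correct its marginals fibrewise. The simplest option is to rely on \cite{LouisLacker}-type constructions via Lipschitz maps $T$ pushing $\mu$ to itself.

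Once finiteness holds, take a minimizing sequence $(\rho^k,v^k)$ and pass to momenta $m^k=\rho^kv^k$. The action $\int\frac{|m|^2}{2\rho}$ is jointly convex and lower semicontinuous. The curves are equi-$\frac12$-Hölder in $\mathcal W_2$ and have fixed marginals, hence are tight, so Arzelà--Ascoli gives uniform convergence in $C([0,1],\Pi(\mu,\nu))$. The marginal constraints are closed under weak convergence, so lower semicontinuity yields a minimizer.

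\emph{$\varepsilon>0$.} Write $\rho_t=f_t\,\mu\otimes\nu$. The Fisher term equals $\frac{\varepsilon^2}{2}\int|\nabla\sqrt{f_t}|^2\,\d(\mu\otimes\nu)$. Together with kinetic energy the functional is strictly convex in the variables $(\rho,m)$, since the Fisher information is convex in $\rho$, and the constraint set is convex. This gives uniqueness, and existence follows as above. For finiteness I use the entropic competitor: run the heat flow (the $\mathcal W_2$-gradient flow of $H(\cdot|\mu\otimes\nu)$) from $\rho_0$ and $\rho_1$. The identity $\frac{\d}{\d t}H=-I$ shows that finite initial entropy makes $\int_0^\infty I\,\d t\le H(\rho_0|\mu\otimes\nu)$. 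The velocity $-\frac12\nabla\log f$ gives kinetic energy equal to a multiple of $I$, which is therefore integrable. This produces a finite-action curve in $\Pi(\mu,\nu)$ after reparametrizing time.

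\emph{$\Gamma$-convergence.} The liminf inequality is immediate, because $\mathcal A_\varepsilon\ge\mathcal A_0$ and $\mathcal A_0$ is lower semicontinuous under both topologies. The main obstacle is the limsup, i.e.\ the recovery sequence. Given $\rho$ with $\mathcal A_0(\rho)<\infty$, I regularize by the entropy gradient flow $\Phi_s$ for a short time $s=s(\varepsilon)$, setting $\rho^\varepsilon_t=\Phi_{s}(\rho_t)$ on the interior. This flow preserves $\Pi(\mu,\nu)$. Using the Schrödinger-on-metric-spaces theory \cite{GenMetricSpace}, which is $\mathrm{EVI}_\lambda$ with $\lambda$ depending on the Hessian bounds of $\log\mu$ and $\log\nu$, its action is contracted up to a factor $e^{-\lambda s}$, and one gets $I(\Phi_s\rho_t)\lesssim \mathcal W_2^2/s$ type regularization bounds. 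The Fisher cost is then $\varepsilon^2/s$, which vanishes if $\varepsilon^2\ll s\to0$. The endpoints are glued back with short gradient-flow segments $\rho_i\to\Phi_s\rho_i$ of length $\sim\varepsilon$. Their cost is controlled by $H(\rho_i|\mu\otimes\nu)<\infty$, and this is where the entropy assumption enters. The key technical step is making these regularization bounds uniform along the whole curve; this requires the Hessian bounds to obtain a $\lambda$-EVI.
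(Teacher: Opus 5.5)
\textbf{Main gap: finiteness for $\varepsilon=0$.} You never construct a finite-action curve in $\Pi(\mu,\nu)$ joining two \emph{arbitrary} couplings $\rho_0,\rho_1$. These may have infinite entropy, or no density at all. Finiteness is the real content of the existence claim: if the infimum were $+\infty$, any curve such as $t\mapsto(1-t)\rho_0+t\rho_1$ would be a trivial ``minimizer''. None of your candidates closes this:
\begin{itemize}
\item The heat-flow concatenation through $\mu\otimes\nu$ only works, as written, for finite-entropy endpoints, and you drop it at the small-time step.
\item Theorem~\ref{Theorem Dacorogna-Moser} only reaches targets $(1+\varphi)\cdot\mu$ with $\varphi\in W^{n,\infty}$ and $\varphi\ge-1+\delta$. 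It says nothing about two couplings on $\R^{2d}$.
\item ``Correcting the marginals of a $\mathcal W_2$-geodesic'' would need time-differentiable Lipschitz maps from its uncontrolled marginals back to $\mu,\nu$, which you do not have.
\item The construction of \cite{LouisLacker} requires strong log-concavity, which is not assumed here.
\end{itemize}

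The missing idea is the paper's reduction to the Gaussian case. You cite \cite[Theorem 1.4]{WeakConc} only to get a log-Sobolev inequality, but it also provides invertible $L$-Lipschitz maps $T,S$ with $T_\#\gamma=\mu$ and $S_\#\gamma=\nu$. Pull $\rho_0,\rho_1$ back to $\Pi(\gamma,\gamma)$ by $(T^{-1},S^{-1})$, join them there with \cite{LouisLacker}, and push the curve forward by $(T,S)$. This map sends $\Pi(\gamma,\gamma)$ into $\Pi(\mu,\nu)$ and multiplies the action by at most $L^2$.

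Alternatively, your heat-flow route can be rescued by the bound that your later ``$\varepsilon^2/s$'' already presupposes. With $\nabla^2\log(\mu\otimes\nu)$ bounded, a Bismut-type formula gives $|\dot\Phi_r\rho|\le c(1+r^{-1/2})$ for $r\in(0,1]$, uniformly over $\rho\in\Pi(\mu,\nu)$. This is integrable at $r=0$, and the log-Sobolev inequality then makes the length on $[1,\infty)$ finite.

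\textbf{The rest: a legitimate alternative route that needs tightening.}

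\emph{Finiteness for $\varepsilon>0$.} Concatenating through $\mu\otimes\nu$ gives finiteness without any pre-existing curve, which is nice. However, squeezing $[0,\infty)$ into a finite interval multiplies the kinetic part by the speed-up factor. So $\int_0^\infty I\,\d t<\infty$ alone is not enough; you need the exponential decay of $H$ given by the log-Sobolev inequality.

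\emph{Recovery sequence.} The paper uses the time-dependent regularization $S_{\varepsilon t(1-t)}\rho_t$. For it, \cite[Theorem 3.12]{GenMetricSpace} directly gives the bound $\frac{e^{-\lambda\varepsilon}}{2}\int_0^1|\dot\rho|_t^2\,\d t+\varepsilon\big(H(\rho_0|\mu\otimes\nu)+H(\rho_1|\mu\otimes\nu)\big)$, with no gluing. Your constant-$s$ interior plus end segments also works, with three corrections:
\begin{itemize}
\item The generic $\mathrm{EVI}_\lambda$ regularization gives $|\partial H|^2(\Phi_s\rho)$ of order $s^{-2}$ for $s\le1$, not ``$\mathcal W_2^2/s$''. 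It is uniform along the curve because second moments are fixed on $\Pi(\mu,\nu)$. The rate $\varepsilon^2/s$ requires the finite-dimensional Bismut bound.
\item An end segment run in time $h$ costs $\lesssim(s/h+\varepsilon^2h/s)\,H(\rho_i|\mu\otimes\nu)$. This must vanish, not merely stay bounded, so you must also impose $s\ll h\to0$. For example, take $h=s/\varepsilon$ with $\varepsilon^2\ll s\ll\varepsilon$ under the $1/s$ bound, which reproduces the paper's $\varepsilon H$ term. Under the $1/s^2$ bound, take $\varepsilon\ll s\ll h\to0$.
\item The paper proves the limsup for $d_{\Pi(\mu,\nu)}$-uniform convergence. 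That requires $\sup_t\int_0^s|\dot\Phi_r\rho_t|\,\d r\to0$, which again uses the $r^{-1/2}$ speed bound, not just $\mathcal W_2$-compactness.
\end{itemize}
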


Existence of minimizers is ensured by \cite{LouisLacker} in the strong log-concavity setting, and we extend this result to the general case using a Lipschitz-continuous transport map from the standard Gaussian to any strictly asymptotically log-concave measure \cite[Theorem 1.4]{WeakConc}. 
The $\Gamma$-convergence proof leverages estimates contained in \cite{GenMetricSpace} for the Schr{\"o}dinger problem on general metric spaces.

\begin{rmk}[Sufficient assumption]
The only properties needed for the proof of Theorem \ref{Existence of geodesics} are the existence of Lipschitz-continuous transport maps from the standard Gaussian distribution to the marginals $\mu$ and $\nu$, and the fact that $\nabla^2\log\mu$ and $\nabla^2\log\nu$ are bounded.
\end{rmk}
 
To better understand the structure of geodesics and minimizers of the regularized problem, we are interested in deriving optimality conditions. 
To do so, we show existence of a Lagrange multiplier associated to the marginal constraints, and we then derive rigorous optimality conditions. 
This first requires defining the appropriate notion of perturbation.

\begin{defn}[Space of perturbations and modified problem] \label{Perturbations and modified problem}
We define the space $S_\mu$ of functions $\varphi : [0,1] \times \R^d \rightarrow \R$ that satisfy, writing $\varphi = (\varphi_t)_{t \in [0,1]}$, \begin{align*}
& \varphi_0 \equiv \varphi_1 \equiv 0 , \qquad \int_{\R^d}\varphi\d\mu = 0, \\
    &\varphi \in L^\infty([0,1], W^{2,\infty}(\R^d)) \cap W^{1,2}((0,1),L^{\infty}(\R^d)).
\end{align*} We similarly define the space $S_\nu$ and we equip both spaces with the norm \begin{equation*}
    N(\varphi):= \esssup_{t \in [0,1]}\lVert \varphi_t\rVert_{W^{2,\infty}(\R^d)} + \bigg(\int_0^1\lVert\partial_t\varphi_t\rVert^2_{L^{\infty}(\R^d)}\d t\bigg)^{1/2}.
\end{equation*}
Now, fix two endpoints $\rho_0,\rho_1 \in \Pi(\mu,\nu)$ such that $H(\rho_0|\mu\otimes\nu), H(\rho_1|\mu\otimes\nu) < +\infty$. For every $(\mu_t)_{t \in [0,1]}, (\nu_t)_{t \in [0,1]} \in C([0,1],\mathcal{P}_2(\R^d))$ such that $\mu_0=\mu_1 = \mu$ and $\nu_0=\nu_1 = \nu$, we define the functional 
\begin{equation} \label{definition modified problem}
    \mathcal{F}_{\varepsilon}(\mu_\cdot,\nu_\cdot) := \inf\big\{\mathcal{A}_{\varepsilon}(\rho), \rho_t \in \Pi(\mu_t,\nu_t) \ \forall t \in [0,1], (\rho\vert_{t=0},\rho\vert_{t=1}) = (\rho_0,\rho_1) \big\}, \quad \forall \varepsilon \in [0,1].
\end{equation} If $\mu_t = (1+\varphi_t)\cdot\mu$, $\nu_t = (1+\psi_t)\cdot\nu$ for $\varphi \in S_\mu, \psi \in S_\nu$, then we denote \eqref{definition modified problem} by $\mathcal{F}_{\varepsilon}(\varphi,\psi)$; if either $1+\varphi$ or $1+\psi$ is not non-negative, then we set $\mathcal{F}_{\varepsilon}(\varphi,\psi):=+\infty$.
\end{defn}

Using this notion of perturbation for the marginal constraints, we now establish existence of the Lagrange multiplier for both the geodesic problem and its regularization. 

\begin{thm}[Existence of the Lagrange multiplier] \label{Existence of the Lagrange multiplier} Suppose that $\mu$ and $\nu$ satisfy Assumption \ref{Assumption 1} with $n = 2$. Then for every $\varepsilon \in [0,1]$, there exists $p^{\varepsilon}$ in the convex sub-differential $\partial\mathcal{F}_{\varepsilon}(0,0) \subset ( S_\mu \times S_\nu )'$ of $\mathcal{F}_\varepsilon$ at $(0,0)$, meaning that 
\begin{equation} \label{dual inequality}
    \mathcal{F}_{\varepsilon}(\varphi,\psi) \geq \mathcal{F}_{\varepsilon}(0,0) + \langle p^{\varepsilon}, (\varphi,\psi)\rangle \quad \forall \varphi \in S_\mu,\psi \in S_\nu.
\end{equation}
    
\end{thm}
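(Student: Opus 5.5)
The plan is to follow Brenier's argument for the pressure in the multiphase problem. It rests on two facts: $\mathcal{F}_\varepsilon$ is convex on $S_\mu\times S_\nu$, and it is bounded above on a neighbourhood of $(0,0)$ in the norm $N$. A convex function that is finite at a point and bounded above near it is continuous there. By Hahn--Banach, its subdifferential at that point is then nonempty in the topological dual, which yields $p^\varepsilon\in(S_\mu\times S_\nu)'$ satisfying \eqref{dual inequality}.

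\textbf{Convexity.} We work in the variables $(\rho,m)=(\rho,\rho v)$. The kinetic term $\int |m|^2/(2\rho)$ is jointly convex. The Fisher term $\int |\nabla \rho|^2/\rho$ (relative to $\mu\otimes\nu$, after expansion) is jointly convex in $\rho$, because $(a,b)\mapsto |b|^2/a$ is convex. The constraint $\rho_t\in\Pi((1+\varphi_t)\mu,(1+\psi_t)\nu)$ is linear in $(\rho,\varphi,\psi)$, and so are the continuity equation and the endpoint conditions. Now take admissible pairs for $(\varphi^i,\psi^i)$, $i=0,1$. Their convex combination is admissible for the convex combination of the perturbations, and its cost is at most the convex combination of the costs. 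Taking infima gives convexity of $\mathcal{F}_\varepsilon$, with values in $(-\infty,+\infty]$ since $\mathcal{F}_\varepsilon\ge0$. Finiteness at $(0,0)$ comes from Theorem \ref{Existence of geodesics}.

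\textbf{Local boundedness.} Let $\rho$ be a minimizer (or near-minimizer) for $\mathcal{F}_\varepsilon(0,0)$. Take $N(\varphi)+N(\psi)\le r$ with $r$ small, so that $\varphi,\psi\ge -1/2$ and Theorem \ref{Theorem Dacorogna-Moser} applies with $n=2$. For each $t$, Theorem \ref{Theorem Dacorogna-Moser} gives maps $T_t,U_t$ with $(T_t)_\#\mu=(1+\varphi_t)\mu$, $(U_t)_\#\nu=(1+\psi_t)\nu$, and $\|T_t-\mathrm{Id}\|_{W^{2,\infty}}\lesssim \|\varphi_t\|_{W^{2,\infty}}$. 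We set $\tilde\rho_t=(T_t\times U_t)_\#\rho_t$, which lies in the perturbed coupling set. Since $\varphi_0=\varphi_1=0$, we have $T_0=T_1=\mathrm{Id}$, so the endpoints are preserved.

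The velocity of $\tilde\rho$ is $\nabla(T_t\times U_t)\,v_t+\partial_t(T_t\times U_t)$, transported. The first term is bounded by $(1+Cr)\|v_t\|$. For the second term we need a time-derivative estimate on the Dacorogna--Moser map. Its construction is $T_t=X^{\varphi_t}_1$, the flow of $-\nabla\psi_s^{\varphi_t}/(1+s\varphi_t)$ with $\psi$ solving \eqref{system elliptic}. Differentiating in $t$ and using the linearity of $\varphi\mapsto\psi$ together with the bound \eqref{estimates elliptic PDE} applied to $\partial_t\varphi_t$ (at the $W^{0,\infty}$ level), one gets $\|\partial_t T_t\|_{L^\infty}\lesssim\|\partial_t\varphi_t\|_{L^\infty}$. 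This term is therefore controlled in $L^2_t$ by $N(\varphi)$.

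For $\varepsilon>0$ we also need the Fisher information of $\tilde\rho_t$. Write $\log\frac{\d\tilde\rho_t}{\d\mu\otimes\nu}$ as the composition of $\log\frac{\d\rho_t}{\d\mu\otimes\nu}$ with $(T_t\times U_t)^{-1}$, plus Jacobian terms and the terms $\log\mu\circ T_t^{-1}-\log\mu$. Their gradients involve $\nabla^2 T_t$, which is why $n=2$ is needed, and $\nabla^2\log\mu$, which is bounded. All of these are bounded by $Cr$. Altogether, $\mathcal{A}_\varepsilon(\tilde\rho)\le C(1+\mathcal{A}_\varepsilon(\rho))$ uniformly on the ball, which gives the upper bound.

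\textbf{Main obstacle.} The hard step is the time-regularity of the family $t\mapsto T_t$. Theorem \ref{Theorem Dacorogna-Moser} is stated for a single $\varphi$, so we must show that the map depends Lipschitz-continuously (in $L^\infty$) on $\varphi$. I would attempt this through a Grönwall argument on the difference of the two flows, using Proposition \ref{Lemma PDE estimates} for the difference of the velocity fields together with the Lipschitz bound on the velocities. A second, milder obstacle is checking measurability in $t$ and the weak continuity equation for $\tilde\rho$.
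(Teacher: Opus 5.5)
Your proposal is correct and is essentially the paper's argument: convexity of $\mathcal{F}_\varepsilon$ plus an upper bound on a neighbourhood of $(0,0)$, obtained by pushing a minimizer through the time-dependent Dacorogna--Moser maps. Your Gr\"onwall estimate $\lVert\partial_t T_t\rVert_\infty\lesssim\lVert\partial_t\varphi_t\rVert_\infty$ actually supplies a step the paper only asserts, since it writes $N(\xi)\le CN(\varphi)$ directly from Theorem~\ref{Theorem Dacorogna-Moser}, which controls only fixed-time spatial norms. One small fix in the Fisher term: $(\nabla T_t^{-1}-I)^{\top}\nabla\log\mu\circ T_t^{-1}$ grows linearly and is not uniformly $O(r)$ on its own, but the Jacobian and $\log\mu$ terms together equal $\nabla\log(1+\varphi_t)$ because $(T_t)_\#\mu=(1+\varphi_t)\mu$, which is bounded by $2r$ (alternatively, use the second moments of $\mu,\nu$ as the paper does).
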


The proof shows that $\mathcal{F}_{\varepsilon}$ is bounded near the origin in $S_\mu \times S_\nu$, and then deduces existence of an element in $\partial\mathcal{F}_{\varepsilon}(0,0)$ by convexity and lower semicontinuity of $\mathcal{F}_{\varepsilon}$.
The boundedness of $\mathcal{F}_{\varepsilon}(\varphi,\psi)$ near the origin crucially relies on the extended Dacorogna-Moser construction from Theorem \ref{Theorem Dacorogna-Moser} to build transport maps from $\mu$ and $\nu$ to the perturbed marginals, before using them as competitors for the modified problem \eqref{definition modified problem}. 
We then leverage the estimates on these maps to bound $\mathcal{F}_{\varepsilon}(\varphi,\psi)$.

Now knowing existence for Lagrange multipliers, we can perform a first-order expansion of the inequality \eqref{dual inequality} to derive optimality conditions. 
However, this expansion requires more regularity for the perturbations, motivating the following definition.

\begin{defn}[Regular spaces of perturbations]
We say that $\varphi$ belongs to the space $\tilde{S}_\mu$ if $\varphi \in S_\mu$ and $\varphi \in L^\infty([0,1],W^{3,\infty}(\R^d))\cap W^{1,2}([0,1],W^{1,\infty}(\R^d))$, and we equip it with the same norm $N(\cdot)$ as the one on $S_\mu$. We similarly define the space $\tilde{S}_\nu$.
\end{defn}

The following result now gives a rigorous meaning to the formal optimality conditions \eqref{formal optimality conditions}, and deduces uniqueness for the Lagrange multipliers of both the geodesic problem and its regularization on the space $(\tilde{S}_\mu \times \tilde{S}_\nu)'$.

\begin{thm}[Optimality conditions and uniqueness for the multipliers] \label{Optimality condition for the Lagrange multiplier}
Suppose that $\mu$ and $ \nu $ satisfy Assumption \ref{Assumption 1} with $n = 3$. For $\varepsilon \in [0,1]$, let $(\rho,v)$ be a minimizer for $\mathcal{F}_{\varepsilon}(0,0)$ and $p^{\varepsilon} \in \partial\mathcal{F}_{\varepsilon}(0,0)$. Then, for every $\varphi \in \tilde{S}_\mu$ and $ \psi \in \tilde{S}_\nu$, we have \begin{equation} \label{optimality conditions}
    \langle p^{\varepsilon}, (\varphi,\psi) \rangle  = \langle \partial_t(\rho v) + \operatorname{div}\{(v \otimes v - w^\varepsilon \otimes w^\varepsilon )\rho\}, (\nabla f,\nabla g)\rangle
\end{equation}  where \begin{equation*}
 w^{\varepsilon}:= \frac{\varepsilon}{2}\nabla\log\frac{\d \rho}{\d \mu\otimes\nu}, \quad \forall \varepsilon \in [0,1],
\end{equation*} and $f$, $g$ are respectively the solution of $\Delta_\mu f = \varphi$ with $\int_{\R^d}f \d\mu = 0$, and $\Delta_\nu g = \psi$ with $\int_{\R^d}g \d\nu = 0$. 
In particular, since \eqref{optimality conditions} holds for every minimizer $(\rho,v)$, this implies uniqueness for $p^{\varepsilon}$ in $(\tilde{S}_\mu \times \tilde{S}_\nu)'$, for every $\varepsilon \in [0,1]$.
    
\end{thm}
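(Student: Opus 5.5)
The plan is to perturb the given minimizer $(\rho,v)$ along a one-parameter family of flows built from the Dacorogna--Moser fields, insert it in the dual inequality \eqref{dual inequality}, and differentiate at zero.

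\textbf{Step 1: the perturbing maps.} Fix $\varphi\in\tilde S_\mu$ and $\psi\in\tilde S_\nu$, and let $\eta\in\R$ be small. For each $t$, let $f_t$ solve $\Delta_\mu f_t=\varphi_t$ with $\int f_t\,\d\mu=0$, and let $g_t$ solve the analogous problem for $\nu$. Proposition~\ref{Lemma PDE estimates} with $n=3$ gives $\nabla f_t\in W^{3,\infty}$. Applying it to $\partial_t\varphi_t$, which lies in $W^{1,\infty}$ by the definition of $\tilde S_\mu$, gives $\partial_t\nabla f\in L^2((0,1),W^{1,\infty})$. Following the construction in the proof of Theorem~\ref{Theorem Dacorogna-Moser} with $\eta\varphi_t$ in place of $\varphi$, I build maps
\[
T^\eta_t = \mathrm{Id} - \eta\nabla f_t + O(\eta^2), \qquad U^\eta_t = \mathrm{Id} - \eta\nabla g_t + O(\eta^2),
\]
with $(T^\eta_t)_\#\mu=(1+\eta\varphi_t)\mu$ and $(U^\eta_t)_\#\nu=(1+\eta\psi_t)\nu$. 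The $O(\eta^2)$ remainders must be controlled in $W^{2,\infty}$, uniformly in $t$, and also after a time derivative. The first-order term comes from $v_s=-\nabla f/(1+s\eta\varphi)$ and is obtained by Taylor expanding the flow.

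\textbf{Step 2: the competitor.} Set $X^\eta_t=T^\eta_t\times U^\eta_t$ and $\rho^\eta_t=(X^\eta_t)_\#\rho_t$. Then $\rho^\eta_t\in\Pi((1+\eta\varphi_t)\mu,(1+\eta\psi_t)\nu)$. Because $\varphi_0=\varphi_1=0$ we have $X^\eta_0=X^\eta_1=\mathrm{Id}$, so the endpoints are unchanged. The velocity is
\[
v^\eta_t = \big(\partial_t X^\eta_t + \nabla X^\eta_t\, v_t\big)\circ (X^\eta_t)^{-1}.
\]
Hence $\mathcal F_\varepsilon(\eta\varphi,\eta\psi)\le \mathcal A_\varepsilon(\rho^\eta)$, with $\mathcal A_\varepsilon$ evaluated on $(\rho^\eta,v^\eta)$. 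Since $\mathcal F_\varepsilon(0,0)=\mathcal A_\varepsilon(\rho)$, \eqref{dual inequality} gives
\[
\eta\,\langle p^\varepsilon,(\varphi,\psi)\rangle \le \mathcal A_\varepsilon(\rho^\eta)-\mathcal A_\varepsilon(\rho)
\]
for all $\eta$ of both signs. Dividing by $\eta$ and letting $\eta\to0^\pm$, the pairing equals the derivative $\frac{\d}{\d\eta}\big|_{\eta=0}\mathcal A_\varepsilon(\rho^\eta)$, provided that derivative exists. Write $\xi=-(\nabla f,\nabla g)$ for the first-order generator.

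\textbf{Step 3: computing the derivative.} For the kinetic part, the derivative is
\[
\int_0^1\!\!\int \langle v,\ \partial_t\xi+\nabla\xi\, v\rangle\,\d\rho\,\d t.
\]
Integrating by parts in time, with $\xi$ vanishing at $t=0,1$, this is the distributional pairing $\langle \partial_t(\rho v)+\operatorname{div}(\rho v\otimes v),(\nabla f,\nabla g)\rangle$. The sign is fixed by the minus in $\xi$.

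For the Fisher part, write $r=\d\rho/\d(\mu\otimes\nu)$. Then
\[
\frac{\d\rho^\eta}{\d(\mu\otimes\nu)} = r\circ (X^\eta)^{-1}\cdot \frac{\det\nabla (X^\eta)^{-1}\,(\mu\otimes\nu)\circ(X^\eta)^{-1}}{\mu\otimes\nu}.
\]
This ratio is smooth in $\eta$ because $\log\mu$ and $\log\nu$ are $C^3$ with bounded derivatives. Differentiating the Fisher information yields
\[
-\frac{\varepsilon^2}{4}\int \langle \nabla\log r\otimes\nabla\log r,\nabla\xi\rangle\,\d\rho,
\]
plus terms involving $\nabla\operatorname{div}_{\mu\otimes\nu}\xi$. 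Because $\Delta_\mu f=\varphi$, these extra terms are exactly the contribution that converts $\mathcal A_\varepsilon$'s Fisher term into the pairing of $-\operatorname{div}(\rho\, w^\varepsilon\otimes w^\varepsilon)$ with $(\nabla f,\nabla g)$. This holds modulo terms that integrate to zero, using $\int\varphi_t\,\d\mu=0$ and the fact that the marginals of $\rho_t$ are $\mu$ and $\nu$. The computation needs the third derivatives of $f$ and $g$, which is why $n=3$ and $\tilde S$ are required.

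\textbf{Step 4: justification and uniqueness.} To differentiate under the integral I use dominated convergence. The difference quotients are bounded by $C\big(|v|^2+|\nabla\log r|^2\big)$ times norms of $\xi$ in $W^{2,\infty}$ and of $\partial_t\xi$ in $L^2_tW^{1,\infty}$. These are integrable because $\mathcal A_\varepsilon(\rho)<\infty$. I expect this to be the main obstacle, together with the second-order remainder estimates on $T^\eta$ in Step 1, especially the Fisher term. Finally, the right-hand side of \eqref{optimality conditions} depends only on $(\varphi,\psi)$, through $(f,g)$, and on a minimizer, while the left-hand side is the same for every $p^\varepsilon\in\partial\mathcal F_\varepsilon(0,0)$. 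So any two such multipliers agree on $\tilde S_\mu\times\tilde S_\nu$, which gives uniqueness in $(\tilde S_\mu\times\tilde S_\nu)'$.
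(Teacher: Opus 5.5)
Your argument is correct in substance, and its skeleton matches the paper's. You push the minimizer forward by a near-identity product map generated by $-(\nabla f,\nabla g)$, insert it into the dual inequality, expand to first order, and use both signs.

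The genuine difference is which object you keep exact.

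\textbf{The paper's route.} The paper fixes the explicit map $T^\delta_t=\mathrm{Id}+\delta\nabla f_t$ and uses $((T^\delta_t)^{-1},\mathrm{Id})_\#\rho_t$ as competitor. It must then prove that the induced perturbation $\varphi^\delta_t=\det\nabla T^\delta_t\,\mu\circ T^\delta_t/\mu-1$ belongs to $S_\mu$ and equals $\delta\varphi+\delta z^\delta$ with $N(z^\delta)\to0$. Continuity of $p^\varepsilon$ then gives $\langle p^\varepsilon,(\varphi^\delta,0)\rangle=\delta\langle p^\varepsilon,(\varphi,0)\rangle+o(\delta)$. The delicate step is bounding $\mu\circ T^\delta_t/\mu$ and its first two derivatives despite the linear growth of $\nabla\log\mu$. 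This is done through the identity $\nabla\log\mu\cdot\nabla f_t=\varphi_t-\Delta f_t$ and its derivatives.

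\textbf{Your route.} You keep the marginal perturbation exactly equal to $\eta\varphi$ via the Dacorogna--Moser flow. The dual side is then exactly linear in $\eta$, and none of the above is needed. The price is a second-order expansion of the flow map, which Theorem~\ref{Theorem Dacorogna-Moser} as stated does not provide. You need $T^\eta_t-\mathrm{Id}+\eta\nabla f_t=O(\eta^2)$ in $W^{2,\infty}$ uniformly in $t$, and $\|\partial_tT^\eta_t+\eta\partial_t\nabla f_t\|_\infty\lesssim\eta^2\|\partial_t\varphi_t\|_{W^{1,\infty}}$. Both follow from Gronwall on $\partial_sX_s=-\eta\nabla f_t(X_s)/(1+s\eta\varphi_t(X_s))$, using $\nabla f\in L^\infty_tW^{3,\infty}$ and $\partial_t\nabla f\in L^2_tW^{1,\infty}$. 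The first of these is where $n=3$ enters: controlling $\nabla f_t\circ X_s-\nabla f_t$ in $W^{2,\infty}$ requires bounded $\nabla^4 f_t$. So your route trades the paper's density-ratio estimates for standard ODE estimates.

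\textbf{Two details to sharpen.}

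First, in the Fisher term, the contributions involving $\nabla\operatorname{div}_{\mu\otimes\nu}\xi$ do not ``convert'' anything; they vanish identically. Since $\operatorname{div}_{\mu\otimes\nu}\xi=-(\varphi_t(x)+\psi_t(y))$ splits into a function of $x$ plus a function of $y$, and the first marginal of $\rho_t$ is $\mu$, you have $\int\nabla_x\log r_t\cdot\nabla a(x)\,\d\rho_t=-\int\Delta_\mu a\,\d\mu=0$. The same holds in $y$. The term $-\frac{\varepsilon^2}{4}\int\langle\nabla\log r\otimes\nabla\log r,\nabla\xi\rangle\,\d\rho$ alone is already the $w^\varepsilon\otimes w^\varepsilon$ pairing.

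Second, in Step 4 your dominating function is incomplete. It must also contain $|\nabla\log(\mu\otimes\nu)|^2$, which comes from $(\nabla X^\eta-I)^\top\nabla\log(\mu\otimes\nu)\circ X^\eta$; this is integrable because $\rho_t\in\Pi(\mu,\nu)$ has finite second moments. It must also contain $\|\partial_t\xi_t\|_\infty^2$, which is only integrable in time.
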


Our derivation of optimality conditions is strongly inspired by \cite{Baradatoptimality}. 
In this work, the quantity $w^\varepsilon$ is substituted by $\frac{\varepsilon}{2}\nabla\log\rho$, and the marginal constraint is replaced by the incompressibility constraint, which corresponds to imposing the Lebesgue measure as one of the marginals. 
Moreover, \cite[Lemma 14]{Baradatoptimality} proves optimality conditions for the distributional gradient of the pressure against $C^{1,2}$ test functions, whereas the adequate space of test functions in our case is $\tilde{S}_\mu \times \tilde{S}_\nu$, requiring test functions to have three derivatives with respect to space. 
This restriction is due to the fact that $W^{3,\infty} (\R^d)$ is not dense in $W^{2,\infty} \cap C^2 (\R^d)$ -- whereas $W^{3,\infty} ( \mathbb{T}^d )$ is dense in $C^{2}  ( \mathbb{T}^d )$ --, preventing us from extending \eqref{optimality conditions} by density.
For the same reason, our weak-$\star$ convergence result Proposition \ref{Proposition weak star convergence} for multipliers only holds in $(\tilde{S}_\mu \times \tilde{S}_\nu)'$, whereas the analog result for the Br{\"o}dinger problem \cite[Theorem 2.6]{SmallNoiseLimit} considers test functions in $C^{1,2}([0,1]\times\mathbb{T}^d)$. We state our weak-$\star$ convergence result hereafter.

\begin{prop}[Convergence of the Lagrange multipliers] \label{Proposition weak star convergence}
For every $\varepsilon \in [0,1]$, let
$p^{\varepsilon}$ be the unique element of  $\partial \mathcal{F}_{\varepsilon}(0,0)$ in $(\tilde{S}_\mu \times \tilde{S}_\nu)'$. Then, \begin{equation}
    p^{\varepsilon} \overset{\star}{\rightharpoonup} p^0
\end{equation} for the weak-$\star$ convergence in $(\tilde{S}_\mu \times \tilde{S}_\nu)'$.

\section{Proofs}

\normalfont{This section is devoted to the proof of the results stated in Section \ref{Section main results}.}

\subsection{The generalized Dacorogna-Moser construction}  \label{Proofs gen DM}
 \begin{proof}[Proof of the parabolic and elliptic PDE estimates (Proposition \ref{Lemma PDE estimates})]
We first prove the estimates on the parabolic PDE, before leveraging them to prove the estimates on the elliptic one.

\paragraph{Parabolic PDE.}
For convenience, we consider the time-reversed PDE with terminal time $T > 0$: 
\begin{equation} \label{time inverted PDE}
 \partial_t v_t + \frac{1}{2}\Delta_\mu v_t = 0, \quad v_T = \varphi.
\end{equation} We claim that there exist positive constants $\alpha, c, (c_{\delta})_{\delta >0}$ such that, for every $0\leq j \leq n$, \begin{align}
    \lVert \nabla^{j+1}v_t\rVert_{\infty} & \leq \label{exp decay} c_{\delta}e^{-\alpha(T-t)}\lVert \varphi\rVert_{W^{j,\infty}}, \qquad \forall t < T - \delta, \\
     \lVert \nabla^{j+1}v_t\rVert_{\infty} & \leq \label{square root decay} \frac{c}{\sqrt{T-t}}\lVert \varphi\rVert_{W^{j,\infty}}, \qquad \forall t \in [T-\delta \vee 0, T) .
\end{align} 

This would imply the desired estimates \eqref{estimates parabolic PDE}. Indeed, $(v_t)_{t \in [0,T]}:=(S_{T-t}\varphi)_{t \in [0,T]}$ satisfies \eqref{time inverted PDE}, so that for $T \geq 1$ the norm $\lVert \nabla^{j+1}v_0\rVert_{\infty}$ decays exponentially and for $T \in (0,1)$ it behaves like the inverse of $\sqrt{T}$.

Now we prove our claim by induction. First, since $b:=\frac{1}{2}\nabla\log\mu$ is Lipschitz-continuous and $\varphi$ is bounded, \cite[Chapter 3, Theorem 13]{FriedmanPDE} guarantees that \eqref{time inverted PDE} has a unique $C^{1,2}$ solution $v_t$, whose derivatives are H{\"o}lder-continuous, and it satisfies $\lVert v_t \rVert_\infty \leq \lVert \varphi \rVert_\infty$. 
Introducing the solution of the stochastic differential equation
\begin{equation} \label{eq:Diffx}
\d X_s^{t,x} = b(X_s^{t,x})\d s + \d B_s, \quad s \geq t, \quad X_t^{t,x} = x, 
\end{equation} 
for $x\in \R^d$ and a Brownian motion $(B_s)_{s \geq t}$ on a probability space $(\Omega,\mathcal{F},\mathbb{P})$, the Feynman-Kac's representation formula \cite[Theorem 5.3]{FriedmanSDE} gives that
\begin{equation} \label{Feynman Kac}
    v_t(x) = \mathbb{E}[\varphi(X_T^{t,x})].
\end{equation} 
For $y \in \R^d$, the coupling by reflection procedure \cite[Theorem 1]{EberleZimmer} constructs a Brownian motion $( \hat{B}_s )_{s \geq t}$ on the same probability space -- that depends on $x$ and $(B_s)_{s \geq t}$ -- such that the corresponding solution $(\hat{X}^{t,y}_s)_{s \geq t}$ of \eqref{eq:Diffx} starting from $y$ 
satisfies 
\begin{equation} \label{est prob}
    p_s(x,y):= \mathbb{P}(X_s^{t,x}  \neq \hat{X}_s^{t,y})  \leq \frac{a}{e^{\beta (s-t)}-1}|x-y|,
\end{equation} where $\beta > 0$ is a constant that depends on $\mu$. Therefore, for some constants $(k_\delta)_{\delta > 0}$ we have $p_s(x,y) \leq k_\delta e^{-\beta(s-t)}|x-y|$ for every $s \geq t+\delta$. Now, using \eqref{Feynman Kac} and the boundedness of $\varphi$ we get \begin{equation*}
    |v_t(x)-v_t(y)| \leq \mathbb{E}[|\varphi(X_T^{t,x})-\varphi(\hat{X}_T^{t,y})|] \leq 2\lVert \varphi\rVert_\infty p_T(x,y),
\end{equation*} 
so that the estimate on $p_s(x,y)$ concludes the proof for $t \in [0, T-\delta)$. 
For $t \in [T-\delta, T)$, the gradient estimate \cite[Theorem 3.4]{PriolaWangEst} reads
\begin{equation*}
    \lVert\nabla v_t \rVert_\infty \leq \frac{2a}{\sqrt{(T-t)\wedge 1}}\lVert \varphi\rVert_\infty
\end{equation*} 
where $a > 0$ only depends on $\mu$, recalling Remark \ref{rem:profile} for verifying the required assumptions.
This concludes the proof of the base case of our induction.

Let us now assume that \eqref{exp decay} and \eqref{square root decay} hold with common constants $c_\delta, c, \alpha$ for every $0\leq j \leq m-1\leq n-1$ and that the derivatives are H{\"o}lder-continuous. Up to changing the constants $(c_\delta)_{\delta > 0}$ and $(k_\delta)_{\delta > 0}$, we can assume that $\beta = \alpha$. 
Let $\partial_I$ denote the partial derivative with respect to the indices $I=\{x_1,\ldots,x_m\}$, so that formally differentiating \eqref{time inverted PDE} with respect to $\partial_I$ gives 
\begin{equation} \label{new PDE induction}
    \partial_t\partial_Iv_t + \frac{1}{2}\Delta_\mu \partial_Iv_t = - \sum_{J \subseteq I, |J|\geq 1}\partial_Jb \cdot \nabla \partial_{I\setminus J}v_t , \quad \partial_I v_T = \partial_I \varphi.
\end{equation} Since $\partial_Jb$ is Lipschitz-continuous -- because $\partial_J$ has at most $m$ derivatives -- and $\nabla\partial_{I\setminus J}v_t$ is H{\"o}lder-continuous for $t < T$  by the induction hypothesis -- $\partial_{I\setminus J}$ has at most $m-1$ derivatives --, the right-hand side in \eqref{new PDE induction} is continuous. 
Since $b$ is Lipschitz-continuous and the initial data $\partial_I \varphi$ is bounded, by \cite[Chapter 3, Theorem 13]{FriedmanPDE} we deduce that this equation is satisfied in the classical sense by $\partial_I v$, that $\partial_I v_t \in C^{1,2}$ for $t < T$ and that the derivatives are H{\"o}lder-continuous, that $\partial_I v_t$ is continuous on $[0,T]$ and that $\lVert \partial_I v_t\rVert_\infty  \leq \lVert \partial_I \varphi\rVert_\infty$. Therefore, we can apply It\^o's formula to $(\partial_I v_s)_{s \in [0,T]}$ and the process $(X_s^{t,x})_{s \geq t}$, and take expectations to get \begin{equation} \label{equation induction step}
    \partial_I v_t(x) = \mathbb{E}[\partial_I \varphi(X_T^{t,x})] + \sum_{J \subseteq I, |J|\geq 1}\int_t^T \mathbb{E}[\partial_J b \cdot \nabla\partial_{I\setminus J}v_s \circ X_s^{t,x}]\d s.
\end{equation} 
Now, we proceed as for the base case. We fix $x,y \in \R^d$ and, for $t \in [T-\delta, T)$, we apply the gradient estimate \cite[Theorem 3.4]{PriolaWangEst} and get \begin{equation*}
    |\partial_I v_t(x) - \partial_I v_t(y)| \leq 2\lVert \partial_I \varphi\rVert_\infty \frac{a}{\sqrt{(T-t)\wedge 1}}|x-y| + 2C_b|x-y| \int_t^T\lVert \nabla v_s\rVert_{W^{m-1,\infty}}\frac{a}{\sqrt{(s-t)\wedge 1}}\d s,
\end{equation*} where $C_b := m \lVert \nabla b\rVert_{W^{m-1,\infty}}$ and we bounded every $\lVert \nabla\partial_{I \setminus J}v_s\rVert_\infty$ by $\lVert \nabla v_s\rVert_{W^{m-1,\infty}}$. By the estimates on $\lVert \nabla v_s\rVert_{W^{m-1,\infty}}$ given by the induction hypothesis we have \begin{equation*}
    |\partial_I v_t(x) - \partial_I v_t(y)| \leq \frac{2a}{\sqrt{T-t}}\lVert \nabla^m \varphi\rVert_\infty |x-y| + 2mC_b|x-y|\int_t^T\frac{ac}{\sqrt{s-t}\sqrt{T-s}}\d s \lVert \varphi\rVert_{W^{m-1,\infty}}
\end{equation*} which implies the bound on $\lVert \nabla^{m+1}v_t\rVert_\infty$ for $t \in [(T-1)\vee 0, T)$ since the second term is bounded in time on $[0,T]$. 

Now suppose that $0\leq t \leq T-\delta $ for $\delta > 0$ small enough.
Using \eqref{equation induction step} to compute the difference $|\partial_I v_t(x) - \partial_I v_t(y)|$, we can use coupling by reflection \cite[Theorem 1]{EberleZimmer} and \eqref{est prob} as previously to bound the first term resulting from \eqref{equation induction step}:
\begin{equation} \label{est first term exp case}
\vert \mathbb{E}[\partial_I \varphi(X_T^{t,x})] - \mathbb{E}[\partial_I \varphi(\hat{X}_T^{t,y})] \vert \leq 2\lVert \partial_I\varphi\rVert_\infty p_T(x,y) \leq 2\lVert \partial_I\varphi\rVert_\infty k_{\delta} e^{-\alpha(T-t)}|x-y|.
\end{equation} 
For the second term resulting from \eqref{equation induction step}, we split the integral in three ones on the intervals $[t, t+\delta/2]$, $[t+\delta/2, T-\delta/2]$ and $[T-\delta/2, T]$. 
Combining the gradient estimate of \cite[Theorem 3.4]{PriolaWangEst} with \eqref{est prob} and the estimates on $\lVert \nabla v_s\rVert_{W^{m-1,\infty}}$ given by the induction step, after defining $A:= 2mC_b$ we can bound the integrand on each by the quantities: 
\begin{align}
    &  A c_{\delta/2}e^{-\alpha(T-s)}\frac{a}{\sqrt{s-t}}|x-y| \lVert \varphi\rVert_{W^{m-1,\infty}}, \quad  s \leq t+\delta/2 \leq T - \delta/2, \\
    & A c_{\delta/2}k_{\delta/2}e^{-\alpha(T-s)}e^{-\alpha(s-t)}|x-y|\lVert \varphi\rVert_{W^{m-1,\infty}}, \quad T -\delta/2> s > t+\delta/2, \\
    & A k_{\delta/2}\frac{c}{\sqrt{T-s}}e^{-\alpha(s-t)}|x-y|\lVert \varphi\rVert_{W^{m-1,\infty}}, \quad  s \ge T -\delta/2 \ge t+\delta/2,
\end{align} 
where, for the last two intervals, we bounded the integrand by $A \lVert \nabla v_s\rVert_{W^{m-1,\infty}}p_s(x,y)$ and then used the estimate \eqref{est prob} as in the base case. 
Integrating the previous quantities yields the desired bound on $\lVert \nabla^{m+1}v_t\rVert_\infty$, up to changing the exponent of the exponential decay from $\alpha$ to $\alpha/2$ and choosing constants correctly by taking $\delta > 0$ small enough. 
This concludes the proof of the induction, and therefore the proof of the estimates on the parabolic PDE.

\paragraph{Elliptic PDE.}
By \cite[Theorem 1.4]{WeakConc} there exists a Lipschitz-continuous transport map from the standard Gaussian $\gamma$ on $\R^d$ to $\mu$, so that $\mu$ satisfies a Poincaré inequality. Therefore, the Lax-Milgram theorem gives that \eqref{system elliptic} has a unique solution $\psi$ in $H^1(\mu)$.

Now, we set $\varphi_t = S_t\varphi$ and we prove that the function $\psi:= -\int_0^\infty \varphi_t \d t$ solves \eqref{system elliptic}. First, we prove that it is well defined pointwise and that it integrates to zero with respect to $\mu$. Using the boundedness of $\varphi_t$ for $t>0$ and the divergence theorem, we have \begin{equation*}
    \frac{\d}{\d t} \int_{\R^d}\varphi_t\d\mu = \int_{\R^d}\partial_t\varphi_t\d\mu = \frac{1}{2}\int_{\R^d}\Delta_\mu\varphi_t\d\mu = \frac{1}{2}\int_{\R^d}\nabla \cdot (\mu\varphi_t)\d x = 0
\end{equation*} where we also used the fact that $\partial_t\varphi_t = \frac{1}{2}\Delta_\mu\varphi_t$ is uniformly bounded for $t > 0$ by \eqref{estimates parabolic PDE}.
Since $\int_{\R^d}\varphi\d\mu = 0$, $\lVert\varphi_t\rVert_{\infty} \leq \lVert\varphi\rVert_\infty$ and $\varphi_t \to \varphi$ pointwise -- using the representation formula \eqref{Feynman Kac} and dominated convergence --, we have that $\int_{\R^d}\varphi_t\d\mu = 0$ for every $t \geq 0$. 
Therefore, integrating with respect to $\d\mu(y)$ and using the bounds on $\lVert \nabla \varphi_t\rVert_\infty$ in the formula \begin{equation*}
    \varphi_t(x) = \varphi_t(y) + \int_0^1 \nabla\varphi_t((1-s)y + s x)\cdot (x-y)\d s,
\end{equation*} we get \begin{equation*}
    |\varphi_t(x)| \leq \lVert\nabla \varphi_t\rVert_\infty \int_{\R^d}|x-y|\d\mu(y),
\end{equation*} where $\int_{\R^d}|y|\d\mu(y) < +\infty$ since $\mu$ has finite second moment, so that $\psi$ is well-defined pointwise and $\varphi_t(x) \in L^1(\d t \otimes \d\mu(x))$.
Therefore, we can write  \begin{equation*}
    \int_{\R^d}\psi\d\mu = -\int_{\R^d}\int_0^{\infty}\varphi_t\d t \d\mu = \int_0^\infty\int_{\R^d}\varphi_t\d\mu\d t = 0
\end{equation*} since $\varphi_t$ integrates to zero with respect to $\mu$ for every $t\geq 0$. Furthermore, using the exponential decay of $\nabla\varphi_t$ as $t \to +\infty$ we also get that $\varphi_t \rightarrow 0$ pointwise as $t \to +\infty$.

Now, by the estimates \eqref{estimates parabolic PDE} we can change the order of the time integral and the space derivatives $\nabla^{j+1}$, $0 \leq j \leq n$, to get the estimates \begin{equation*}
    \lVert \nabla^{j+1}\psi\rVert_\infty \leq C \lVert \varphi\rVert_{W^{j,\infty}}, \quad \forall \ 0\leq j \leq n.
\end{equation*} 
It only remains to prove that $\frac{1}{2}\Delta_\mu\psi = \varphi$. 
By the estimates on the parabolic PDE \eqref{estimates parabolic PDE}, we can change the order of the time integral and $\Delta_\mu$ to get \begin{equation*}
    \frac{1}{2}\Delta_\mu\psi = -\int_0^{+\infty}\frac{1}{2}\Delta_\mu\varphi_t\d t = -\int_0^{\infty}\partial_t \varphi_t\d t = -\lim_{t \to +\infty}\varphi_t + \varphi = \varphi.
\end{equation*} This concludes the proof.

\end{proof}

\begin{proof}[Proof of the extended Dacorogna-Moser construction (Theorem \ref{Theorem Dacorogna-Moser})]
Let $\psi$ be the unique solution of $\Delta_\mu \psi = \varphi$ such that $\int_{\R^d}\psi\d\mu = 0$ given by Proposition \ref{Lemma PDE estimates} (the result for the operator $\frac{1}{2}\Delta_\mu$ implies the one for $\Delta_\mu$) and define \begin{equation*}
    \rho_s:= (1+s\varphi)\cdot\mu, \quad v_s:= -\frac{\nabla\psi}{1+s \varphi}, \quad s \in [0,1].
\end{equation*} The couple $(\rho_s,v_s)_{s \in [0,1]}$ satisfies the continuity equation \eqref{eq:transportDM} so that, defining the flow of diffeomorphisms \begin{equation*}
    \begin{cases}
        \partial_sX_s = v_s \circ X_s, \\
        X_0 = \text{Id},
    \end{cases}
\end{equation*} 
we classically have that $(X_s)_{\#}\mu = \mu_s$, for every $s \in [0,1]$. Using standard Gronwall-type arguments to bound derivatives of the flow, the estimates on $\lVert\nabla\psi\rVert_{W^{n,\infty}}$ given by Proposition \ref{Lemma PDE estimates}, the fact that $|1+s\varphi| \geq \delta$ for $\delta<1$ -- since $\varphi \geq -1+\delta$ -- and the fact that $\lVert \varphi\rVert_{W^{n,\infty}} \leq M$, we can deduce the desired bounds on $\lVert X_1- \text{Id}\rVert_{W^{n,\infty}}$. To get the estimate for $\lVert X_1^{-1} - \text{Id}\rVert_{W^{n,\infty}}$ we just have to notice that $X^{-1}_1 = Y_0$ where \begin{equation*}
    \begin{cases}
        \partial_s Y_s = v_s \circ Y_s, \\
        Y_1 = \text{Id},
    \end{cases}
\end{equation*} by the uniqueness of the solution to the flow, which is given by the Lipschitz continuity of the velocity field $v_s$ uniformly with respect to $s \in [0,1]$. So we can repeat the previous argument to get the estimates for $X_1^{-1}$ and conclude the proof of this Theorem.
     
 \end{proof}

\subsection{The problem of geodesics in the space of couplings} \label{proofs geodesic}

\begin{proof}[Proof of the existence of geodesics and of $\Gamma$-convergence (Theorem \ref{Existence of geodesics})] We first prove the existence of geodesics and then $\Gamma$-convergence.

\paragraph{Existence of geodesics.} Assuming that $\mu$ and $\nu$ are $\lambda$-log-concave for some $\lambda > 0$, the existence of geodesics is proved in \cite[Section 3]{LouisLacker}, recalling that $\nabla^2\log\mu$ and $\nabla^2\log\nu$ are bounded from Assumption \ref{Assumption 1}. 
More specifically, \cite[Proposition 3.6]{LouisLacker} proves existence of curves of finite length in $\Pi(\mu,\nu)$ and uses the compactness criteria \cite[Proposition 3.1]{LouisLacker} to deduce existence of geodesics -- taking $\sigma$ therein to be the topology of weak convergence of measures. 
The same argument provides existence of geodesics in the strictly asymptotically log-concave setting if we show existence of curves of finite length connecting any two points in $\Pi( \mu, \nu)$. 

Using the strict asymptotic log-concavity of $\mu$, $\nu$ and Remark \ref{rem:profile}, \cite[Theorem 1.4]{WeakConc} provides invertible transport maps $T$ and $S$ with finite Lipschitz constant $L>0$ from the standard Gaussian $\gamma$ on $\R^d$ to $\mu$ and $\nu$, respectively. 
Let $\rho_0$, $\rho_1$ be any two points in $\Pi(\mu,\nu)$.
We have that $\eta_i:=(T^{-1},S^{-1})_{\#}\rho_i \in \Pi(\gamma,\gamma)$ for $i \in \{ 0,1 \}$. 
From the aforementioned result in \cite[Section 3]{LouisLacker}, there exists an absolutely continuous curve of finite length $(\eta_t)_{t \in [0,1]} \subset \Pi(\gamma,\gamma)$ connecting $\eta_0$ to $\eta_1$, and let $(w_t)_{t \in [0,1]}$ be a velocity vector field provided by \cite[Theorem 8.3.1]{ambrosioGradientFlowsMetric2008} such that $(\eta,w)$ satisfies the continuity equation \eqref{eq:transportDM} and 
\[ \mathcal{A}_0(\eta) = \int_0^1\int_{\R^d \times \R^d} \frac12 |w_t|^2\d\eta_t\d t. \] 
Defining \begin{equation*}
    \rho_t:= (T,S)_{\#}\eta_t, \quad v_t := \begin{pmatrix}
        \nabla T & 0\\
        0 & \nabla S
    \end{pmatrix}\cdot w_t \circ (T^{-1},S^{-1}),
\end{equation*} 
we can check that $(\rho,v)$ satisfies the continuity equation, and that $\rho_t \in \Pi(\mu,\nu)$ for every $t \in [0,1]$. 
From $|\nabla T|, |\nabla S| \leq L$, we have \begin{equation} \label{bound kinetic energy by Lipschitz maps}
    \mathcal{A}_0(\rho) \leq \int_0^1\int_{\R^d \times \R^d} \frac12 \bigg|\begin{pmatrix}
        \nabla T & 0\\
        0 & \nabla S
    \end{pmatrix} \cdot w_t \bigg|^2\d\eta_t \d t \leq L^2 \mathcal{A}_0(\eta) < +\infty,
\end{equation} so that we deduce that $(\rho_t)_{t \in [0,1]}$ is a curve of finite length in $\Pi(\mu,\nu)$ between $\rho_0$ and $\rho_1$, as desired.

\paragraph{Finiteness of the regularized problem.}
This part relies on estimates from \cite{GenMetricSpace}, which studies a general minimization problem in metric measure spaces. 
Rather than providing an incomplete introduction to the theory of optimal transport and gradient flow in metric spaces, we refer the reader to the textbook \cite{ambrosioGradientFlowsMetric2008} -- or more concisely to the definitions provided in \cite[Setting 3.1]{GenMetricSpace} and references therein. 

First, we notice that the minimization problem \eqref{def regularization} is equivalent to minimizing \begin{equation*}
\inf_{\rho \in AC([0,1],\mathcal{P}(\R^d \times \R^d))}    \bigg\{\frac{1}{2}\int_0^1|\dot\rho|^2_t\d t + \frac{\varepsilon^2}{8}\int_0^1\int_{\R^d \times \R^d}\bigg |\nabla\log\frac {\d\rho_t}{\d \mu \otimes\nu}\bigg|^2\d\rho_t\d t, \ (\rho_t)_{t \in [0,1]} \subset \Pi(\mu,\nu)\bigg\},
\end{equation*} 
where the endpoints are still $\rho_0, \rho_1$ and we recall that the metric derivative $|\dot\rho|_t$ of an absolutely continuous curve $(\rho_t)_{t \in [0,1]}$ is defined as
\begin{equation*}
    |\dot \rho|_t:= \limsup_{s \to t}\frac{\mathcal{W}_2(\rho_t,\rho_s)}{|t-s|}.
\end{equation*} 
By the boundedness of $\nabla^2\log\mu$ and $\nabla^2\log\nu$, the relative entropy $\rho \mapsto H(\rho|\mu\otimes\nu)$ is $\lambda$-convex along geodesics in $(\mathcal{P}_2(\R^d\times\R^d),\mathcal{W}_2)$ for some $\lambda < 0$, and therefore the $EVI_{\lambda}$-gradient flow $( S_t\gamma )_{t \geq 0}$ of $\rho \mapsto H(\rho|\mu\otimes\nu)$ starting from any $\gamma \in \mathcal{P}_2 ( \R^d \times \R^d)$ is well-defined -- see e.g. \cite[Setting 3.1]{GenMetricSpace}. By studying the associated Fokker-Planck equation, $(S_t)_{t \geq 0}$ leaves invariant the space of couplings $\Pi(\mu,\nu)$  -- see e.g. \cite[Proof of Theorem 4.6]{LouisLacker}.

Now, let $\rho_0, \rho_1 \in \Pi(\mu,\nu)$ with finite relative entropy, and let $(\rho_t)_{t \in [0,1]}$ be a curve of finite length in $\Pi(\mu,\nu)$ that connects them -- this exists from the first part of the proof.
We define $\rho^\varepsilon_t:= S_{\varepsilon t (1-t)}\rho_t$ for every $t \in [0,1], \ \varepsilon \in [0,1]$. We notice that $(\rho^{\varepsilon}_t)_{t \in [0,1]} \subset \Pi(\mu,\nu)$. Since $(\mathcal{P}_2(\R^d \times \R^d), \mathcal{W}_2)$ satisfies \cite[Setting 3.1, Assumption 3.2]{GenMetricSpace}, we can use the estimate \cite[Theorem 3.12]{GenMetricSpace} combined with $H(\cdot|\mu\otimes\nu) \geq 0$ and $\lambda < 0$, to deduce that
\begin{equation} \label{estimate for Gamma convergence}
 \frac{1}{2}\int_0^1|\dot \rho^\varepsilon|^2_t\d t + \frac{\varepsilon^2}{2}\int_0^1|\partial H(\rho^\varepsilon_t|\mu\otimes\nu)|^2\d t \leq \frac{e^{-\lambda \varepsilon}}{2}\int_0^1|\dot \rho|^2_t\d t + \varepsilon (H(\rho_0|\mu\otimes\nu) + H(\rho_1|\mu\otimes\nu)),
\end{equation} where \begin{equation*}
    |\partial H(\gamma|\mu\otimes\nu)|^2 = \frac{1}{4}\int_{\R^d \times \R^d}\bigg|\nabla\log\frac{\d \gamma}{\d \mu\otimes\nu}\bigg|^2\d\gamma, \quad \forall \gamma: \ H(\gamma|\mu\otimes\nu) < +\infty,
\end{equation*} 
from \cite[Theorem 10.4.9]{ambrosioGradientFlowsMetric2008}. Since $\rho^{\varepsilon}_t \in \Pi(\mu,\nu)$ for every $t \in [0,1]$ and $ \varepsilon \in [0,1]$, \eqref{estimate for Gamma convergence} implies that \eqref{def regularization} is finite. 

\paragraph{Existence and uniqueness for the regularized problem.} 
The existence of a minimizer is a direct adaptation of \cite[Proposition 3.1]{LouisLacker}, using compactness for a sequence of minimizing curves and lower semicontinuity with respect to the weak convergence of measures.
The lower semicontinuity of the objective function follows from the one of $\mathcal{A}_0(\rho)$ -- see e.g. \cite[Section 2.2]{AGS13Ricci}
-- and the one of the remaining term, using Fatou's lemma and the lower semicontinuity of $|\partial H(\cdot|\mu\otimes\nu)|^2$ -- proved in \cite[Theorem 10.4.14]{ambrosioGradientFlowsMetric2008}. 
The uniqueness of the minimizer for $\varepsilon > 0$ is a consequence of the strict convexity of the Fisher information on the set of probability measures, combined with the usual convexity of the kinetic action with respect to $(\rho,\rho v)$ for $(\rho,v)$ satisfying the continuity equation \eqref{eq:transportDM}.  

\paragraph{$\boldsymbol{\Gamma}$-convergence.} 
Since $d_{\Pi(\mu,\nu)}$-uniform convergence is stronger than pointwise-in-time convergence with respect to the weak convergence of measures, it is sufficient to prove the $\lim\inf$ inequality with respect to the pointwise convergence -- so it is a consequence of the lower semicontinuity of the kinetic energy $\mathcal{A}_0$ and the non-negativity of the Fisher information. Similarly, it is sufficient to prove the $\limsup$ inequality with respect to the $d_{\Pi(\mu,\nu)}$-uniform one. Let $(\rho_t)_{t \in [0,1]} \subset \Pi(\mu,\nu)$ be a curve of finite length connecting $\rho_0$ and $\rho_1$ and define $\rho^\varepsilon_t := S_{\varepsilon t (1-t)}\rho_t$ as above. If we prove that $\sup_{t \in [0,1]}d_{\Pi}(\rho_t^\varepsilon,\rho_t) \to 0$ as $\varepsilon \to 0$, the $\limsup$ inequality is a consequence of the estimate \eqref{estimate for Gamma convergence} and the finiteness of the relative entropy of $\rho_0$ and $\rho_1$. 
Since the curve $(S_s\rho_t)_{s \in [0,\varepsilon t (1-t)]} \subset \Pi(\mu,\nu)$ connects $\rho_t$ and $\rho^{\varepsilon}_t$, we have that \begin{equation} \label{uniform convergence recovery sequence}
    d_{\Pi}(\rho_t^\varepsilon,\rho_t) \leq \int_0^{\varepsilon t (1-t)}|\dot S_s\rho_t|_s\d s.
\end{equation} 
We recall the energy identity \cite[Theorem 3.5]{ExpEstimate}
\begin{equation} \label{energy identity}
    |\dot S_s\rho_t|_s^2 = |\partial H(S_s\rho_t|\mu\otimes\nu)|^2 = \frac{1}{4}\int_{\R^d \times \R^d}\bigg\lvert \nabla\log\frac{\d S_s\rho_t}{\d \mu \otimes \nu}\bigg\rvert^2\d (S_s\rho_t).
\end{equation} Using the stochastic representation formula 
\begin{align} \label{Bismut type}
    \nabla\log S_s(\rho_t)(x) & = -\frac{1}{\sqrt{2}s}\mathbb{E}\bigg[\int_0^s(I + r \nabla^2\log\mu\otimes\nu(X_r))\d B_r \bigg| X_s = x\bigg ], \\
    \d X_r & = \nabla\log\mu\otimes\nu(X_r) \d r + \nonumber
    \sqrt{2}\d B_r, \quad X_0 \sim \rho_t,
\end{align} which we got from \cite[Remark 4.13]{BismutStuff}, we now show that \begin{equation} \label{ineq short time gamma convergence}
    |\dot S_s\rho_t|_s \leq c(1+1/\sqrt{s})
\end{equation} for some $c>0$.
To do so, we square \eqref{Bismut type} and we use It{\^o}'s isometry as well as the boundedness  of $\nabla^2\log\mu\otimes\nu$. 
Integrating with respect to $\d (S_s\rho_t)$,
\begin{equation} \label{first part ineq short time}
    \int_{\R^d \times \R^d}\big\lvert \nabla\log S_s\rho_t\big\rvert^2\d (S_s\rho_t) \le \frac{a}{s}
\end{equation} for some constant $a>0$. Furthermore, by the boundedness of $\nabla^2\log\mu\otimes\nu$, the term \begin{equation*}
    \int_{\R^d \times \R^d}|\nabla\log\mu\otimes\nu|^2\d (S_s\rho_t)
\end{equation*} is bounded uniformly for $ s \geq 0$, so by \eqref{first part ineq short time} and the energy identity \eqref{energy identity} we deduce \eqref{ineq short time gamma convergence}. Therefore, the right-hand side of \eqref{uniform convergence recovery sequence} is bounded uniformly in $t \in [0,1]$ by a quantity that goes to zero as $\varepsilon \to 0$. This concludes the proof.

\end{proof}

\begin{prop}[Convexity and lower semicontinuity of $\mathcal{F}_{\varepsilon}$] \label{Prop convexity and lower semicontinuity}
For every $\varepsilon \in [0,1]$ the functional $\mathcal{F}_{\varepsilon}$ is convex and lower semicontinuous with respect to the topology on $C([0,1],\mathcal{P}_2(\R^d))^2$. 
\end{prop}

\begin{proof}
For the convexity, if $\rho^i_t \in \Pi(\mu^i_t, \nu^i_t)$ for every $ t \in [0,1], i \in \{0,1\}$, then $(1-s)\rho^0_t+s\rho^1_t \in \Pi(\mu^s_t,\nu^s_t)$ for every $t, s \in [0,1]$, where $\mu^s_t:= (1-s)\mu_t^0 + s\mu_t^1$ and $\nu^s_t:= (1-s)\nu^0_t + s\nu^1_t$.
Leveraging the convexity of the relative Fisher information, we can then consider convex combinations of minimizers and reproduce \cite[Proof of Lemma 12]{Baradatoptimality} without any change.

For the lower-semicontinuity, if $\rho^{(n)} \in \Pi(\mu^{(n)},\nu^{(n)})$, $\rho^{(n)} \to \rho$, $\mu^{(n)} \to \mu$ and $\nu^{(n)} \to \nu$ for the weak convergence of measures, then it is direct that $\rho \in \Pi(\mu,\nu)$.
Since $\mathcal{A}_{\varepsilon}$ is lower-semicontinuous with respect to the pointwise-in-time convergence, we can then substitute $\boldsymbol {\mathcal{H}}_\nu$ with $\mathcal{A}_{\varepsilon}$ in  \cite[Proof of Lemma 12]{Baradatoptimality}, and the result follows from the exact same arguments.
\end{proof}

\begin{proof}[Proof of existence for the Lagrange multiplier (Theorem \ref{Existence of the Lagrange multiplier})]

By convexity and lower semicontinuity of $\mathcal{F}_{\varepsilon}$ on $S_\mu \times S_\nu$, the convex analysis result \cite[Theorem 23.4]{Rockafellar} reduces the problem of existence for a Lagrange multiplier, i.e. an element $p^\varepsilon \in \partial\mathcal{F}_{\varepsilon}(0,0)$, to proving that $\mathcal{F}_{\varepsilon}$ is bounded in a neighborhood of $(0,0)$ in $S_\mu \times S_\nu$. 
Let $(\rho,v)$ be a solution of \eqref{def regularization} (which exists by Theorem \ref{Existence of geodesics}), and let $\varphi \in S_\mu, \psi \in S_\nu$ such that $N(\varphi), N(\psi) \leq 1/2$. 
For every $t \in [0,1]$, let $T_t$ and $S_t$ be the transport maps constructed in Theorem \ref{Theorem Dacorogna-Moser} such that \begin{equation*}
    (T_t)_{\#}\mu = (1+\varphi_t)\cdot\mu, \quad (S_t)_{\#}\nu = (1+\psi_t)\cdot\nu,
\end{equation*} 
and define \begin{equation*}
    \tilde{\rho}_t := (T_t, S_t)_{\#}\rho_t, \quad \tilde{v}_t:= \bigg(\partial_t(T_t, S_t)  + \begin{pmatrix}
        \nabla T_t & 0 \\
        0 & \nabla S_t
    \end{pmatrix} \cdot v_t\bigg) \circ (T_t^{-1},S_t^{-1}).
\end{equation*} 
We can check that $\tilde{\rho}_t \in \Pi((1+\varphi_t)\cdot\mu,(1+\psi_t)\cdot\nu) $ for every $t \in [0,1]$, and $(\tilde{\rho}, \tilde{v})$ satisfies the continuity equation, so that it is a candidate for the minimization problem $\mathcal{F}_{\varepsilon}(\varphi,\psi)$.
From there, we closely follow  \cite[Proof of Lemma 13]{Baradatoptimality}.

\paragraph{Bounding $\boldsymbol{\mathcal{A}_{0}(\tilde{\rho})}$.} 
Writing $T_t(x) = x +\xi_t(x)$, $S_t(y) = y + \zeta_t(y)$, a straightforward adaptation of \cite[Proof of Lemma 13, Step 2]{Baradatoptimality} yields \begin{equation*}
    \mathcal{A}_0(\tilde{\rho}) \lesssim \bigg( 1 + \frac{1}{2}\int_0^1\int_{\R^d \times \R^d}|v_t|^2\d\rho_t\d t\bigg)(1 + N(\xi)^2 + N(\zeta)^2),
\end{equation*} 
where $N(\xi) \le  CN(\varphi), N(\zeta) \le CN(\psi) $ for some $C>0$ by the estimates of Theorem \ref{Theorem Dacorogna-Moser}. 
In particular, this proves the bound for $\varepsilon = 0$.

\paragraph{Bounding the relative Fisher information.} 
To alleviate notations, set $R_t(x,y) := (T_t(x),S_t(y))$, $T_t^{-1}(x) = x + \tilde{\xi}_t(x)$, $S_t^{-1}(y) = y + \tilde{\zeta}_t(y)$ and \begin{equation*}
    \mathcal{G}(\gamma):= \frac{1}{8}\int_0^1\int_{\R^d\times\R^d}\bigg|\nabla\log\frac{\d \gamma_t}{\d \mu\otimes\nu}\bigg|^2\d\gamma_t\d t.
\end{equation*} 
By definition of $\tilde{\rho}$, 
\[ \nabla\log\tilde{\rho}_t  =^t\!\nabla R^{-1}_t \cdot \nabla\log\rho_t \circ R_t^{-1} + F_t \quad \text{for} \quad F_t := (\nabla_x\log\det\nabla T_t^{-1}, \nabla_y\log\det\nabla S_t^{-1}), \]
so that, using $(R_t)_{\#}\rho_t = \tilde{\rho}_t$, 
\begin{align*}
    \mathcal{G}(\tilde{\rho}) &= \frac{1}{8}\int_0^1\int_{\R^d\times\R^d}\bigg|\nabla\log\frac{\d \tilde{\rho}_t}{\d \mu\otimes\nu} \circ R_t\bigg|^2\d \rho_t\d t \\
    & = \frac{1}{8}\int_0^1\int_{\R^d\times\R^d}\bigg| \  ^t\! \nabla R^{-1}_t (R_t) \cdot \bigg[\nabla\log\frac{\d \rho_t}{\d \mu \otimes \nu} + \nabla\log(\mu\otimes\nu)\bigg]+ F_t \circ R_t - \nabla\log(\mu\otimes\nu) \circ R_t\bigg|^2\d\rho_t \d t.
\end{align*} 
Using the estimates on $R_t^{-1}$ given by Theorem \ref{Theorem Dacorogna-Moser}, we bound the first term by \begin{equation*}
   C_0(1+ N(\varphi)^2 + N(\psi)^2)\mathcal{G}(\rho),
\end{equation*} for some constant $C_0 >0$. 
Since $\nabla\log\mu$ and $\nabla\log\nu$ have at most linear growth and $\rho_t \in \Pi(\mu,\nu)$, we bound the terms involving $\nabla\log\mu\otimes\nu$ by \begin{equation*}
    C_1(1+ N(\varphi)^2+N(\psi)^2),
\end{equation*} for some constant $C_1 > 0$.
To bound $F_t \circ R_t$, we use the identity \begin{equation*}
    \nabla_x\log\det \nabla T_t^{-1} \circ T_t(x) = (I + ^t\!\nabla \xi_t(x))\cdot \nabla\operatorname{div}(\tilde{\xi}_t)(T_t(x)),
\end{equation*} 
so that \begin{equation*}
    \lVert F_t \rVert_{\infty} \lesssim (1+N(\varphi))^2N(\varphi)^2 + (1+N(\psi))^2N(\psi)^2.
\end{equation*} 

Gathering this bound on $\mathcal{G}(\tilde{\rho})$ with the previous one on $\mathcal{A}_0(\tilde{\rho})$ and using that $N(\varphi), N(\psi) \leq 1/2$, we get 
\begin{equation} \label{bound for the functional}
    \mathcal{F}_{\varepsilon}(\varphi,\psi) \leq \mathcal{A}_{\varepsilon}(\tilde{\rho})  \leq C(1+\mathcal{A}_{\varepsilon}(\rho)) = C(1+ \mathcal{F}_{\varepsilon}(0,0)) \leq C(1+\mathcal{F}_1(0,0)),
\end{equation} 
where $C>0$ is a constant that does not depend on the choice of $\varphi \in S_\mu, \psi \in S_\nu$ and $\varepsilon \in [0,1]$. 
This concludes the proof.
\end{proof}

\begin{proof}[Proof of optimality conditions and uniqueness of the Lagrange multiplier (Theorem \ref{Optimality condition for the Lagrange multiplier})] 

To derive optimality conditions, we can perturb one marginal at a time using the linearity of $p^{\varepsilon}$. 
Let $\varphi \in \tilde{S}_\mu $ and let $f$ be the solution of \begin{equation*}
        \Delta_\mu f = \varphi , \quad
        \int_{\R^d}f \d\mu = 0.
\end{equation*} 
Define $T^\delta_t(x) := x + \delta \nabla f_t(x)$, and \begin{equation*}
    \varphi^\delta_t(x) := \vert \det \nabla T^\delta_t(x)
    \vert \frac{\mu(T_t^\delta(x))}{\mu(x)} - 1.
\end{equation*} 
We now prove that $\varphi^\delta \in S_\mu$, so that we can write the dual inequality $\mathcal{F}_{\varepsilon}(\varphi^\delta,0) \geq \mathcal{F}_{\varepsilon}(0,0) + \langle p^\varepsilon, (\varphi^\delta,0)\rangle$, and compute a first-order expansion to derive the optimality conditions. 

\paragraph{$\boldsymbol{\varphi^\delta}$ belongs to $\boldsymbol{S_\mu}$.} 
By Proposition \ref{Lemma PDE estimates}, $\nabla f_t(x) \in L^\infty([0,1], W^{3,\infty})\cap W^{1,2}([0,1],W^{1,\infty})$ so that $\nabla T^\delta_t(x) = \mathrm{Id} + \delta \nabla^2f_t(x)$ is uniformly bounded; for $\delta > 0$ small enough this implies that 
\begin{equation} \label{eq:expDet}
\vert \det\nabla T^\delta_t(x) \vert = \det \nabla T^\delta_t(x) = 1+ \delta \Delta f_t(x) + \delta r_t^\delta(x), 
\end{equation} 
where $r_t^\delta(x)$ is a function in $L^\infty([0,1],W^{2,\infty}) \cap W^{1,2}([0,1],L^\infty(\R^d))$ such that $N(r^\delta) \to 0$ as $\delta \downarrow 0$. 
We now prove that $\mu\circ T^\delta_t(x)/\mu(x)$ is uniformly bounded in time and space. Indeed, defining  \begin{equation*}
    R_t^\delta(x):= \sup_{y \in [x, x+\delta\nabla f_t(x)]} \frac{\mu(y)}{\mu(x)},
\end{equation*} we have
\begin{align*}
    |R_t^\delta(x)-1| & \leq \delta\sup_{y \in [x, x+\delta\nabla f_t(x)]} \frac{\nabla\mu(y)}{\mu(x)} \cdot \nabla f_t(x) \leq \delta \sup_{y \in [x, x+\delta\nabla f_t(x)]}\nabla\log\mu(y)\cdot\nabla f_t(x) R_t^\delta(x) \\[0.2cm]
    & \leq \delta R_t^\delta(x)( \delta \lVert\nabla^2\log\mu\rVert_\infty
   \sup_{t \in [0,1]} \lVert\nabla f_t\rVert_\infty + |\nabla\log\mu(x) \cdot \nabla f_t(x)|) \\[0.2cm]
    & \leq  \delta R_t^\delta(x)( \delta \lVert\nabla^2\log\mu\rVert_\infty \sup_{t \in [0,1]} \lVert\nabla f_t\rVert_\infty + \sup_{t \in [0,1]}\lVert \varphi_t - \Delta f_t\rVert_\infty),
\end{align*} 
where we used $\nabla\log\mu\cdot\nabla f_t = \Delta_\mu f_t - \Delta f_t = \varphi_t - \Delta f_t$, together with our assumptions on $\varphi$ and the bounds from Proposition \ref{Lemma PDE estimates}.

Therefore, there exists a constant $C>0$ that does not depend on $t$ and $x$ such that $R_t^\delta(x) \leq 1 + C\delta R_t^\delta(x)$, so that $R_t^\delta(x) \leq 1/(1-C\delta)$ for $\delta$ small enough, which implies the uniform bound on $\mu\circ T^\delta_t(x)/\mu(x)$ and the fact that $\sup_t\lVert\varphi_t^\delta\rVert_\infty < +\infty$.

We now prove the uniform boundedness of $\nabla\varphi^\delta$ and of $\nabla^2\varphi^\delta$. 
Since the quantities on the right-hand side of \eqref{eq:expDet} are in $L^\infty([0,1],W^{2,\infty})$ from $\varphi \in S_\mu \cap L^\infty([0,1],W^{3,\infty})$ and Proposition \ref{Lemma PDE estimates}, we only need to prove the boundedness of the first and second derivatives of $\mu\circ T^\delta_t(x)/\mu(x)$. We have \begin{equation} \label{derivative nabla mu frac}
    \nabla\frac{\mu\circ T^\delta_t(x)}{\mu(x)} = ((\mathrm{Id} + \delta \nabla^2 f_t)\cdot\nabla\log\mu \circ T^\delta_t(x) - \nabla\log\mu(x))\frac{\mu\circ T^\delta_t(x)}{\mu(x)}, 
\end{equation} 
where we saw that $\mu\circ T^\delta_t(x)/\mu(x)$ is uniformly bounded, and 
\[ |\nabla\log\mu\circ T_t^\delta -\nabla\log\mu| \leq \delta \lVert \nabla^2\log\mu\rVert_\infty \sup_{t \in [0,1]}\lVert \nabla f_t\rVert_\infty. \] 
Thus, we only have to bound $\nabla^2 f_t\cdot\nabla\log\mu \circ T^\delta_t(x)$ in \eqref{derivative nabla mu frac}. Using the expansion 
\[ \nabla\log\mu\circ T^\delta_t(x) = \nabla\log\mu(x) + \delta\nabla^2\log\mu(\hat{x})\cdot\nabla f_t(x), \]
where $\hat{x}$ is given by the Taylor-Laplace formula, and the fact that $\nabla^2\log\mu$, $\nabla f$, $\nabla^2 f$ are uniformly bounded, it only remains to prove that $\nabla^2 f_t \cdot\nabla\log\mu$ is uniformly bounded. This follows from differentiating
\begin{equation} \label{eq:ToDiff}
\Delta f_t + \nabla\log\mu \cdot \nabla f_t = \varphi_t, 
\end{equation} 
and combining the bounds from Proposition \ref{Lemma PDE estimates} with the fact that $\nabla^2\log\mu$ is bounded. 

To prove the bound on $\nabla^2\mu\circ T_t^\delta/\mu$, we differentiate \eqref{derivative nabla mu frac}. 
Using similar arguments, we end up bounding the quantity $\langle\nabla^3 f_t(x), \nabla\log\mu(x)\rangle$.
This follows from differentiating \eqref{eq:ToDiff} twice and using the boundedness of $\nabla^3\log\mu$. 

We thus prove that $\varphi^\delta \in L^\infty([0,1],W^{2,\infty})$. Using similar arguments, we can prove that $\varphi^\delta \in W^{1,2}([0,1],L^\infty(\R^d))$. 
It remains to prove that $\int_{\R^d}\varphi^\delta(y) \d\mu (y) = 0$, which follows from the change of variable $y = T^\delta_t(x)$, for $\delta$ small enough. Therefore, $\varphi^\delta$ belongs to $S_\mu$.

\paragraph{Expansion of $\boldsymbol{\langle p^\varepsilon, (\varphi^\delta,0)\rangle}$.}

For $\delta$ small enough, we have the expansion $|\det\nabla T^\delta_t(x)| = 1+ \delta\Delta f_t(x) + \delta r_t^\delta(x)$, where $N(r^\delta) \to 0$ as $\delta \downarrow 0$. 
Performing a Taylor expansion, we further have 
\begin{equation} \label{eq:RemainExpan}
    \frac{\mu \circ T^\delta_t(x)}{\mu(x)} = 1 + \delta\nabla\log\mu(x) \cdot \nabla f_t(x) + \int_0^\delta  \frac{(\delta-r)^2}{2} \frac{\nabla^2\mu\circ T_t^r(x)}{\mu(x)}\nabla f_t(x) \cdot \nabla f_t(x) \d r.
\end{equation} 
Let $\delta s^\delta_t(x)$ denote the last term on the right-hand side. 
From the Lipschitz-continuity of $\nabla^j\log\mu$ for $j \in \{1,2,3\}$, the same computations that proved that $\varphi^\delta \in S_\mu$ show that $N(s^\delta) \to 0$ as $\delta \downarrow 0$. Gathering \eqref{eq:expDet}-\eqref{eq:RemainExpan} yields
\begin{equation*}
    \varphi^\delta_t(x) = (1+\delta\Delta f_t + \delta r_t^\delta)(1+\delta \nabla\log\mu\cdot\nabla f_t + \delta s_t^\delta) -1 = \delta \Delta_\mu f_t + \delta z_t^\delta = \delta \varphi_t + \delta z_t^\delta,
\end{equation*} where $N(z^\delta) \to 0$ as $\delta \to 0$. Using the fact that $\varphi^\delta$ and $\varphi$ belong to $S_\mu$ we further get that $z^\delta \in S_\mu$, so that 
\begin{equation} \label{expansion dual brackets}
    \langle p^\varepsilon, (\varphi^\delta,0)\rangle = \delta \langle p^\varepsilon, (\varphi,0)\rangle + o(\delta).
\end{equation} 

\paragraph{Expansion of $\boldsymbol{\mathcal{F}_{\varepsilon}(\varphi^\delta,0)}$.} 
Let us now expand the left-hand side in the dual inequality. 
Let $(\rho, v)$ be a minimizer for \eqref{def regularization} and let $\rho_t^\delta := ((T_t^\delta)^{-1},\text{Id})_{\#}\rho_t$ and $v^\delta_t(x,y):= [ (\partial_t(T^\delta_t)^{-1},0)+ \nabla_{x,y}(T_t^\delta)^{-1} \cdot v_t] \circ (T_t^\delta(x),y)$ .
Since $(\rho^\delta,v^\delta)$ satisfies the continuity equation and $(T^\delta_t)_{\#}(1+\varphi^\delta)\cdot\mu = \mu$, this provides a competitor for $\mathcal{F}_{\varepsilon}(\varphi^\delta,0)$. 
Let us now expand the inequality $\mathcal{A}_{\varepsilon}(\rho^\delta) \geq \mathcal{F}_{\varepsilon}(\varphi^\delta,0)$. 

From $\sup_{t \in [0,1]}\lVert\nabla f_t\rVert_{W^{3,\infty}} < +\infty$, the map $T^\delta$ is a diffeomorphism for $\delta>0$ small enough and $\nabla^j T^\delta, \nabla^j (T^{\delta})^{-1}$ are uniformly bounded in time and space for $1\leq j \leq 3$. 
Therefore, a straightforward adaptation of
\cite[Proof of Lemma 14, Equation (39)]{Baradatoptimality} yields
the expansion 
\begin{equation} \label{term zero}
    \mathcal{A}_0(\rho^\delta) = \mathcal{A}_0(\rho)+\delta \big\langle\partial_t(\rho v) + \operatorname{div}(\rho(v \otimes v)), (\nabla f,0)\big\rangle + o(\delta),
\end{equation} where \begin{align*}
    \big\langle\partial_t(\rho v), (\nabla f,0)\big\rangle & = \int_0^1\int_{\R^d \times\R^d} (\nabla f_t,0)\cdot v_t\d \rho_t\d t \\
    \big\langle\operatorname{div}(\rho(v \otimes v)), (\nabla f,0)\big\rangle & = \int_0^1\int_{\R^d \times\R^d} \begin{pmatrix}
        \nabla^2f_t & 0\\
        0 & 0
    \end{pmatrix}v_t \cdot v_t \d \rho_t \d t.
\end{align*} This is well defined by the bounds satisfied by $\nabla f$ thanks to Proposition \ref{Lemma PDE estimates}.

We now expand $\mathcal{G}(\rho^\delta)$. Since $\rho_t^\delta = (T_t^{\delta})^{-1}_{\#}\rho_t$, we have \begin{equation*}
    \mathcal{G}(\rho^\delta) = \frac{1}{8}\int_0^1\int_{\R^d\times \R^d}\bigg|\nabla\log\frac{\d \rho^\delta_t}{\d \mu\otimes\nu} \circ ((T_t^{\delta})^{-1}(x),y)\bigg|^2\d\rho_t\d t,
\end{equation*} 
and we expand the right-hand side in this equality. Once again, we borrow the following equality from \cite[Proof of Lemma 14, Estimation of $\mathcal{F}(\rho^\varepsilon)$]{Baradatoptimality}:
\begin{equation*}
   \nabla_x\log\rho^\delta_t \circ ((T_t^\delta)^{-1}(x), y) = (I + \delta \nabla^2 f_t(x))\cdot \nabla_x\log\rho_t(x,y) + \delta \nabla \Delta f_t(x) + o(\delta).
\end{equation*} 
A Taylor expansion further yields
\begin{equation*}
    \nabla\log\mu(x) - \nabla\log\mu(T_t^\delta)^{-1}(x) = \delta\nabla^2\log\mu(x)\cdot\nabla f_t(x) + o(\delta),
\end{equation*} 
so we get \begin{align*}
    &\nabla_x\log\frac{\d \rho^\delta}{\d \mu\otimes\nu} ((T_t^\delta)^{-1}(x),y) \\
    & = (I+\delta ^t\! \ \nabla^2 f_t)\cdot  \nabla_x\log\frac{\d \rho_t}{\d \mu \otimes \nu}  + \delta\nabla\Delta f_t(x) + \delta^t\!\ \nabla^2 f_t \cdot \nabla\log\mu + \delta \nabla^2\log\mu\cdot\nabla f_t + o(\delta).
\end{align*} In particular, this allows us to write \begin{equation} \label{first step Fisher}
    \mathcal{G}(\rho^\delta)-\mathcal{G}(\rho) = \frac{\delta}{4}\int_0^1\int_{\R^d \times \R^d}\langle \nabla_x\log\frac{\d\rho_t}{\d \mu\otimes\nu}, ^t\! \ \nabla^2 f_t \cdot\nabla_x\log\rho_t + \nabla \Delta f_t + \nabla^2\log\mu\cdot\nabla f_t\rangle\d\rho_t\d t + o(\delta).
\end{equation} 
Using integration by parts and $\rho_t \in \Pi(\mu,\nu)$, we have
\begin{align}
    &\int_{\R^d \times \R^d}\nabla_x\log\frac{\d \rho_t}{\d \mu \otimes \nu}\cdot \nabla\Delta f_t \d \rho_t = 0,  \nonumber\\
    & \int_{\R^d \times \R^d}\nabla_x\log\frac{\d \rho_t}{\d \mu \otimes \nu}\cdot \nabla(\nabla\log\mu \cdot \nabla f_t)\d\rho_t = 0, \quad \forall t \in [0,1]. \label{intparts 2}
\end{align} Indeed, \begin{equation} \label{eq cancel out}
    \int_{\R^d \times \R^d}\nabla_x\log\frac{\d \rho_t}{\d \mu \otimes \nu}\cdot \nabla\Delta f_t \d \rho_t = \int_{\R^d \times \R^d}\nabla_x\log\rho_t\cdot \nabla\Delta f_t \d \rho_t - \int_{\R^d \times \R^d}\nabla_x\log\mu\cdot \nabla\Delta f_t \d \rho_t
\end{equation} and \begin{align*}
    \int_{\R^d \times \R^d}\nabla_x\log\rho_t\cdot \nabla\Delta f_t \d \rho_t & = \int_{\R^d \times \R^d}\nabla_x\rho_t\cdot \nabla\Delta f_t \d x \d y = \int_{\R^d \times \R^d} \Delta^2 f_t \rho_t\d x \d y \\
    & = \int_{\R^d} \Delta^2 f_t \d \mu(x) = \int_{\R^d} \nabla\log\mu \cdot \nabla f_t(x) \d \mu(x) \\
    & = \int_{\R^d \times \R^d} \nabla\log\mu \cdot \nabla f_t \d \rho_t,
\end{align*} where $\Delta^2$ means that the Laplacian is applied twice, so that the terms in \eqref{eq cancel out} cancel out. A similar computation can be done for the second equation in \eqref{intparts 2}. Therefore, using \eqref{first step Fisher} and $\nabla(\nabla\log\mu \cdot \nabla f_t) = \nabla^2\log\mu \cdot\nabla f_t + ^t\!
 \nabla^2 f_t \cdot\nabla\log\mu$ we can write \begin{equation} \label{Final expansion Fisher}
    \mathcal{G}(\rho^\delta) = \mathcal{G}(\rho) + \frac{\delta}{4}\int_0^1\int_{\R^d \times \R^d}\bigg\langle \nabla_x\log\frac{\d \rho_t}{ \d\mu\otimes\nu}, ^t\!  \nabla^2 f_t \cdot \nabla_x\log\frac{\d \rho_t}{\d \mu\otimes\nu}\bigg\rangle \d\rho_t \d t + o(\delta).
\end{equation}

\paragraph{Optimality conditions.} Gathering \eqref{expansion dual brackets}-\eqref{term zero}-\eqref{Final expansion Fisher}, dividing by $\delta$ and taking $\delta \to 0$, we get 
\begin{equation*}
    \langle p^\varepsilon, (\varphi,0)\rangle \leq \langle \partial_t(\rho v) + \operatorname{div}(\rho(v \otimes v - w \otimes w)), (\nabla f,0)\rangle,
\end{equation*} where we defined $w$ as in the statement of Theorem \ref{Optimality condition for the Lagrange multiplier}, so that substituting $\varphi$ with $-\varphi$ yields an equality. 
The same computations for perturbations $\psi \in S_\nu$ of the second marginal then yield the optimality conditions, using the linearity of $p^\varepsilon$.
For every $\varepsilon \in [0,1]$,
the choice of the minimizer $(\rho^{\varepsilon},v^{\varepsilon})$ in the right-hand side of the optimality conditions was arbitrary, so that we deduce uniqueness of the Lagrange multiplier $p^{\varepsilon}$ in $(\tilde{S}_\mu \times \tilde{S}_\nu)'$, concluding the proof.
\end{proof}

\begin{proof}[Proof of the weak-$\star$ convergence result Proposition \ref{Proposition weak star convergence}]

Using the optimality conditions \eqref{optimality conditions}, we only need to prove that the right-hand side is continuous at $\varepsilon = 0^+$. 
By linearity of $p^\varepsilon$, we can perturb one marginal at a time.
Let $(\rho^\varepsilon, v^\varepsilon)$ be a minimizer for $\mathcal{F}_{\varepsilon}(0,0)$, $\varphi \in \tilde{S}_\mu$ and $ f$ the solution of $\Delta_\mu f = \varphi, \ \int_{\R^d} f\d\mu = 0$. We rewrite the term $\langle \operatorname{div}(\rho( w^\varepsilon \otimes w^\varepsilon )), (\nabla f,0)\rangle,$ as \begin{equation} \label{Fisher term weak* conv}
    \frac{\varepsilon^2}{4}\int_0^1\int_{\R^d \times \R^d}\langle\nabla^2 f_t \cdot \nabla_x\log\frac{\d \rho_t^\varepsilon}{\d \mu \otimes\nu}, \nabla_x\log\frac{\d \rho_t^\varepsilon}{\d \mu \otimes\nu}\rangle\d\rho^\varepsilon_t\d t,
\end{equation} 
which we can bound by \begin{equation*}
    \sup_{t\in[0,1]}\lVert\nabla^2 f_t\rVert_{\infty}[ \mathcal{F}_{\varepsilon}(0,0)- \mathcal{A}_0(\rho^\varepsilon) ].
\end{equation*} By the $\Gamma$-convergence result in Theorem \ref{Existence of geodesics} we have that $\mathcal{F}_{\varepsilon}(0,0) \to \mathcal{F}_0(0,0)$. Furthermore, since $\{\mathcal{A}_0(\rho^\varepsilon)\}_{\varepsilon \in [0,1]}$ is bounded, following the proof of \cite[Proposition 3.1]{LouisLacker} we see that, up to a subsequence $\{\varepsilon_k\}_{k\geq1} \downarrow 0$,  $\rho^{\varepsilon_k}_t \to \rho_t$ in the pointwise-in-time topology of weak convergence of measures, where $(\rho_t)_{t \in [0,1]}$ is a minimizer for $\mathcal{F}_0(0,0)$: this can be deduced using the lower semicontinuity of the kinetic energy $\mathcal{A}_0$ and the fact that $\mathcal{F}_{\varepsilon}(0,0) \to \mathcal{F}_0(0,0)$. Moreover we have \begin{equation*}
    0\leq \limsup_{k \to +\infty}\mathcal{F}_{\varepsilon_k}(0,0)- \mathcal{A}_{0}(\rho^{\varepsilon_k}) \leq \mathcal{F}_0(0,0)- \mathcal{A}_0(\rho) = 0,
\end{equation*} so we can conclude the convergence of \eqref{Fisher term weak* conv} to zero without restricting to subsequences.
Now we have to prove that 
\begin{equation} \label{convergence rho v}
    \langle\partial_t(\rho^\varepsilon v^\varepsilon) + \nabla\cdot(\rho^\varepsilon(v^\varepsilon\otimes v^\varepsilon)), (\nabla f,0)\rangle \to \langle \partial_t(\rho^0 v^0) + \nabla\cdot(\rho^0(v^0\otimes v^0)), (\nabla f,0)\rangle,
\end{equation} for some $(\rho^0,v^0)$ that is optimal for the geodesic problem and for every $\varphi \in \tilde{S}_\mu$. By the uniform boundedness of $\mathcal{A}_0(\rho^\varepsilon,v^\varepsilon)$, there exists $(\rho,v)$ that is optimal for the geodesic problem such that $(\rho^\varepsilon, v^\varepsilon) \to (\rho,v)$ with respect to the weak convergence, up to subsequences. 
By readily adapting \cite[Proof of Theorem 4.3.2]{brenier2020examples}, we deduce that \eqref{convergence rho v} holds up to subsequences for every $\varphi \in S_\mu$ such that $\partial_t\nabla f$ is jointly continuous, which holds here using the additional regularity required on $\varphi$. In the end, we do not need to restrict to subsequences for the convergence to hold by the uniqueness of the limit.

\end{proof} 
\end{prop}

\addcontentsline{toc}{section}{Appendix}
\appendix
\section{Appendix: entropy minimization on path space} \label{Appendix}
\setcounter{section}{1}

This appendix informally sketches the connection between the regularized problem \eqref{def regularization} and 
its entropic minimization on path space formulation, which can be the thought of as the Schrödinger bridge problem restricted to the space of couplings $\Pi(\mu,\nu)$.

Let $\rho_0, \ \rho_1 \in \Pi(\mu,\nu)$ be fixed endpoints.
Let $\Omega := C([0,1],\R^d\times\R^d)$ denote the canonical space -- or \emph{path space} --, and let $(X_t)_{t \in [0,1]}$ denote the canonical process on $\Omega$, so that the marginal law $P_t$ at time $t$ of any $P \in \mathcal{P}(\Omega)$ is given by $P_t = (X_t)_{\#}P$. 
We define the measure $R^\varepsilon$ on $\Omega$ as the path law of the diffusion process in $\R^d \times \R^d$,
\[
\d Z_t^{\varepsilon} = \frac{\varepsilon}{2}\nabla\log\mu\otimes\nu(Z_t^{\varepsilon})\d t + \sqrt{\varepsilon}\d B_t, \quad Z_0^{\varepsilon} \sim \mu\otimes\nu, \]
for a Brownian motion $(B_t)_{t \geq 0}$.
Then, the entropic formulation of \eqref{def regularization} is the minimization problem
\begin{equation} \label{entropic pathwise}
    \inf \{ \varepsilon H(P|R^\varepsilon), \ P \in \mathcal{P}(\Omega), \ (P_0,P_1) = (\rho_0,\rho_1), \ P_t \in \Pi(\mu,\nu) \ \forall t \in (0,1)\},
\end{equation} 
where we recall that the pathwise entropy is defined as
\begin{equation*}
   H(P|R^\varepsilon):= \begin{cases}
       \displaystyle\int_{\Omega}\log\frac{\d P}{\d R^\varepsilon}\d P, \quad &\text{if} \ P \ll R^\varepsilon, \\
       +\infty \quad &\text{otherwise}.
   \end{cases}
\end{equation*} 
The precise connection between both formulations is informally stated below.

\begin{prop}[Informal]
    Let $\rho_0, \rho_1 \in \Pi(\mu,\nu)$ be such that $H(\rho_0|\mu\otimes\nu), \ H(\rho_1|\mu\otimes\nu) < +\infty$. Then \eqref{def regularization} and \eqref{entropic pathwise} are equivalent in the sense that, for any $P \in \mathcal{P}(\Omega)$ such that $(P_0,P_1) = (\rho_0,\rho_1)$ and $P_t \in \Pi(\mu,\nu) \ \forall t \in [0,1]$, we have \begin{equation*}
        \varepsilon H(P|R^\varepsilon) = \mathcal{A}_{\varepsilon}( P_\cdot ) + \varepsilon\frac{H(\rho_0|\mu\otimes\nu)+ H(\rho_1|\mu\otimes\nu)}{2},
    \end{equation*} 
    where $(P_\cdot ) = (P_t )_{0 \leq t \leq 1}$ is the curve of the time-marginals of $P$, and $\mathcal{A}_{\varepsilon}$ is introduced in Definition \ref{Def reg functional and reg prob}.
\end{prop}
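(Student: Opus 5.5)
The identity should be proved pointwise in $P$: it is a statement about the path measure $P$ and its curve of marginals $(P_t)$, and the constraint $P_t \in \Pi(\mu,\nu)$ plays no role in it. The constraint only ensures that both sides are then minimized over the same set. Write $m := \mu\otimes\nu$ and note that $R^\varepsilon$ is the stationary, reversible diffusion with generator $\frac{\varepsilon}{2}\Delta_m$, started from $m$.

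\textbf{Step 1 (disintegration and Girsanov).} Assume $H(P|R^\varepsilon)<+\infty$, since otherwise both sides should be infinite. First disintegrate with respect to the initial position:
\[ H(P|R^\varepsilon) = H(\rho_0|m) + \mathbb{E}_P\big[H(P^{X_0}|R^{\varepsilon,X_0})\big]. \]
By Föllmer's entropy characterization (Girsanov), $P$ is the law of a process $\d X_t = \beta_t\,\d t + \sqrt{\varepsilon}\,\d B_t$ with an adapted drift $\beta$, and
\[ \varepsilon\,\mathbb{E}_P\big[H(P^{X_0}|R^{\varepsilon,X_0})\big] = \tfrac12\int_0^1 \mathbb{E}_P\big|\beta_t - \tfrac{\varepsilon}{2}\nabla\log m(X_t)\big|^2\d t. \]
Projecting $\beta_t$ onto $X_t$ (Markovian projection, conditional expectation) can only decrease this quantity. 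Since we want an equality with the optimal-velocity formulation in $\mathcal{A}_\varepsilon$, I will take $P$ Markov with $\beta_t$ a function of $X_t$. For a general $P$ this yields ``$\ge$''; this is where the statement is genuinely informal, and I would flag it.

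\textbf{Step 2 (current and osmotic velocities).} The Fokker–Planck equation for $P_t$ reads $\partial_t P_t + \nabla\cdot(P_t \beta_t) = \frac{\varepsilon}{2}\Delta P_t$. Hence $P_t$ solves the continuity equation with current velocity $v_t := \beta_t - \frac{\varepsilon}{2}\nabla\log P_t$. Writing $w_t := \frac{\varepsilon}{2}\nabla\log\frac{\d P_t}{\d m}$ gives
\[ \beta_t - \tfrac{\varepsilon}{2}\nabla\log m = v_t + w_t. \]
Expanding the square then gives
\[ \varepsilon H(P|R^\varepsilon) = \varepsilon H(\rho_0|m) + \int_0^1\!\!\int \Big(\tfrac12|v_t|^2 + \tfrac{\varepsilon^2}{8}\Big|\nabla\log\tfrac{\d P_t}{\d m}\Big|^2\Big)\d P_t\,\d t + \int_0^1\!\!\int v_t\cdot w_t\,\d P_t\,\d t . \]

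\textbf{Step 3 (cross term via the entropy derivative).} Along the continuity equation,
\[ \frac{\d}{\d t}H(P_t|m) = \int \nabla\log\frac{\d P_t}{\d m}\cdot v_t\,\d P_t, \]
so the cross term equals $\frac{\varepsilon}{2}\big(H(\rho_1|m)-H(\rho_0|m)\big)$. Adding this to $\varepsilon H(\rho_0|m)$ gives $\varepsilon\frac{H(\rho_0|m)+H(\rho_1|m)}{2}$, and the middle term is exactly the action of Definition \ref{Def reg functional and reg prob}. Equivalently, one can average the forward Girsanov formula with the backward one obtained by time reversal; reversibility of $R^\varepsilon$ makes the backward drift $v_t - w_t$ relative to the same reference.

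\textbf{Step 4 (identifying the action).} It remains to check that this $v$ is the minimal-norm velocity in $\mathcal{A}_\varepsilon(P_\cdot)$, which requires a gradient, or $L^2(P_t)$-closure-of-gradients, structure. For the optimal $P$ this follows from the Schrödinger-type structure of minimizers. Conversely, any admissible pair $(\rho,v)$ with finite $\mathcal{A}_\varepsilon$ is realized by the Markov process with drift $v + \frac{\varepsilon}{2}\nabla\log\rho$, which gives the reverse inequality between the infima.

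The main obstacle is rigor in Step 3. One must justify the chain rule for $t\mapsto H(P_t|m)$, which needs finite Fisher information for a.e.\ $t$ (available since $\mathcal{A}_\varepsilon<\infty$) and moment bounds from Assumption \ref{Assumption 1}. One must also control the integrability of the cross term $v_t\cdot w_t$ and the boundary values at $t=0,1$, where the finite entropies of $\rho_0,\rho_1$ enter. I would handle these by the AGS chain rule for $\lambda$-convex entropies (\cite[Theorem 10.4.9]{ambrosioGradientFlowsMetric2008}), as in the proof of Theorem \ref{Existence of geodesics}.
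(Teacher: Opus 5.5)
Your proposal is correct in substance and follows the route the paper only points to. The paper gives no proof; it refers to Girsanov theory under a finite entropy condition and to \cite[Section 3]{Baradatoptimality}. You are also right that for a general $P$ only ``$\geq$'' can hold, with equality for the Markov process of drift $v_t+\frac{\varepsilon}{2}\nabla\log P_t$ with $v_t$ tangent, which is exactly the equivalence of the two minimization problems. Relatedly, your remark that both sides are infinite when $H(P|R^\varepsilon)=+\infty$ is false: the constant process $X_t=X_0\sim\mu\otimes\nu$ has $H(P|R^\varepsilon)=+\infty$ but a vanishing right-hand side. In that case the inequality is simply trivial.

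The one point to fix is where $\int_0^1\!\int|\nabla\log\frac{\d P_t}{\d\mu\otimes\nu}|^2\,\d P_t\,\d t<+\infty$ comes from in the ``$\geq$'' direction. It cannot come from $\mathcal{A}_\varepsilon(P_\cdot)<+\infty$, which is part of what is being proved; it must be derived from $H(P|R^\varepsilon)<+\infty$. Your time-reversal variant does this: reversibility gives $H(P^*|R^\varepsilon)=H(P|R^\varepsilon)$ for the reversed path measure $P^*$, so both $v+w$ and $v-w$ lie in $L^2(\d P_t\,\d t)$. The entropy-dissipation estimate for the Fokker--Planck equation would also work. Once this is in place, splitting the square in Step 2 is legitimate.
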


This statement can be made rigorous using the (probabilistic) Girsanov theory under finite entropy condition \cite{leonard2012girsanov} -- although this goes beyond the scope of the present work.
We refer to \cite{leonard2013survey}
for a rigorous discussion of such pathwise formulations in the context of the Schr{\"o}dinger bridge problem, and to \cite{EntropicInt} and \cite[Section 3]{Baradatoptimality} in the context of the Br{\"o}dinger problem.
We finally emphasize the convenience of the entropic formulation \eqref{entropic pathwise}, which directly benefits from the nice properties of the relative entropy -- such as lower semicontinuity, strict convexity and compactness of level sets --, although \eqref{def regularization} was more adequate to derive a complete PDE description of optimality conditions.

\subsection*{Acknowledgements}

The authors thank Giovanni Conforti and Daniel Lacker for suggesting the problem and fruitful discussions, as well as Maria Colombo and Thomas Gallou{\"e}t.
The authors further thank Roberto Colombo for useful comments and proofreading.

\bibliography{ReportRefsDacMos.bib}
\bibliographystyle{alpha}
\nocite{*}

\end{document}